\DeclareMathAlphabet{\mathbb}{U}{msb}{m}{n}
\DeclareMathAlphabet{\mathfrak}{U}{euf}{m}{n} 
\DeclareMathAlphabet{\mathbb}{U}{msb}{m}{n}
\definecolor{red}{rgb}{1,0,0}
\definecolor{darkred}{RGB}{192,0,0}
\newcommand{\binomial}[2]{\left( \begin{array}{c}\hspace{-5pt} #1 \\ \hspace{-5pt}#2 \end{array}\hspace{-5pt}\right)}
\newcommand{\bfa}{\mathbf{a}}
\newcommand{\bfm}{\mathbf{m}}
\newcommand{\bfx}{\mathbf{x}}
\newcommand{\bfy}{\mathbf{y}}
\newcommand{\calA}{\mathcal{A}}
\newcommand{\calH}{\mathcal{H}}
\newcommand{\calP}{\mathcal{P}}
\newcommand{\bbC}{\mathbb{C}}
\newcommand{\bbK}{\mathbb{K}}
\newcommand{\bbN}{\mathbb{N}}
\newcommand{\bbR}{\mathbb{R}}
\newcommand{\mfS}{\mathfrak{S}}
\newcommand{\rme}{\mathrm{e}}
\newcommand{\rmi}{\mathrm{i}}
\newcommand{\rmp}{\mathrm{p}}
\newcommand{\bfalpha}{\boldsymbol{\alpha}}
\newcommand{\pder}{\mathop{}\!\mathrm{\partial}}
\newcommand{\Lap}{\mathop{}\!\Delta}
\DeclareMathOperator{\brk}{brk}
\DeclareMathOperator{\Cat}{Cat}
\DeclareMathOperator{\p}{p}
\DeclareMathOperator{\rk}{rk}
\DeclarePairedDelimiter{\abs}{\lvert}{\rvert}
\newcommand{\Oa}{\mathrm{O}}
\titleformat{\section}{
 \vspace{2pt}\scshape\fontfamily{ptm}\raggedright\large}{}{0em}{\hspace{-0.4pt}\large \thesection.\hspace{0.5em}}[\color{black}\titlerule \vspace{-1pt}]
\scshape\fontfamily{ptm}\raggedright\large}{}{0em}
\titlerule \vspace{-1pt}]
\numberwithin{equation}{section}
\theoremstyle{definition}
\newtheorem{defn}[equation]{Definition}
\theoremstyle{plain}
\newtheorem{teo}[defn]{Theorem}
\newtheorem{prop}[defn]{Proposition}
\newtheorem{lem}[defn]{Lemma}
\newtheorem{cor}[defn]{Corollary}
\theoremstyle{remark}
\newtheorem{exam}[defn]{Example}
\renewcommand{\sectionmark}[1]{\markboth{\normalfont \scshape\fontfamily{ptm}\selectfont Cosimo Flavi}{\normalfont \scshape\fontfamily{ptm}\selectfont Upper bounds for the rank of powers of quadrics}}
\title{\large Upper bounds for the rank of powers of quadrics}
\author{Cosimo Flavi}
\address{{\normalfont (Cosimo Flavi)},
\normalfont \scshape\fontfamily{ptm}\selectfont
	Università degli Studi di Firenze, 
	Dipartimento di Matematica e Informatica "Ulisse Dini", \normalfont{Viale Giovanni Battista Morgagni 67/a, 50134 Florence, Italy.}}
\email{cosimo.flavi@unifi.it}
\keywords{Additive decompositions, quadratic forms, symmetric tensor, tensor rank}
\subjclass{Primary 14N07}
\begin{document}
\renewcommand{\abstractname}{\normalfont \scshape\fontfamily{ptm}\selectfont{Abstract}}
\begin{abstract}
We establish an upper bound for the rank of every power of an arbitrary quadratic form. Specifically, for any $s\in\bbN$, we prove that the $s$-th power of a quadratic form of rank $n$ grows as $n^{s}$. Furthermore, we demonstrate that its rank is subgeneric for all $n>(2s-1)^2$.
\end{abstract}
\maketitle
\thispagestyle{empty}
\section{Introduction}
\markboth{\normalfont \scshape\fontfamily{ptm}\selectfont Cosimo Flavi}{\normalfont \scshape\fontfamily{ptm}\selectfont Upper bounds for the rank of powers of quadrics}
Quadratic forms are common objects in many branches of mathematics.
The general theory of quadratic forms over the rationals and rational integers dates back to the late 19\textsuperscript{th} century and it was first introduced by H.~Minkowski (see \cite{Min84}). 
A simple and well-known fact is that any quadratic form is equivalent to the form
\[
q_n=x_1^2+\cdots+x_n^2
\]
for some $n\in\bbN$ (see, e.g.~\cite{Ser73}*{chapter 4}). Then, for any $s\in\bbN$, $q_n^s$ is a $2s$-degree form which is invariant under the action of the orthogonal group $\Oa_n(\bbC)$. 

We recall that, for any homogeneous polynomial $f\in\bbC[x_1,\dots,x_n]$ of degree $d$, a \textit{Waring decomposition}, or simply \textit{decomposition}, of $f$ of \textit{size} $r$ is
an expression of $f$ as a sum of $d$-th powers of $r$ different linear forms $l_1,\dots,l_r\in\bbC[x_1,\dots,x_n]$, which can be viewed as points in $\bbC^n$, that is,
\[
f=\sum_{j=1}^rl_j^d.
\] 
The \textit{Waring rank} (or simply \textit{rank}) of $f$, denoted by $\rk f$, is the smallest natural number $r$ such that there exists a decomposition of $f$ of size $r$. Currently, obtaining the rank of a given polynomial still remains a challenging problem and there is no general efficient method to solve it, except for the special case of two variables, first analyzed  in \cite{Syl51}and completely solved in \cite{CS11}.  
Nevertheless, many partial results and methods have been developed over the years. For a more detailed overview of Waring decompositions, see for example \cites{BCC+18,BGI11,CGO14,Lan12,LO13}, where many applications are also presented, such as telecommunications in electrical engineering (see e.g.~\cite{Che99} and \cite{DC07}) or cumulant tensors in statistics (see e.g.~\cite{McC87}).
 
The problem of determining the rank of a generic form was solved by J.~Alexander and A.~Hirschowitz in \cite{AH95}, who proved that, except for a few cases of degree $2$, $3$, and $4$,
there exists a Zariski open set $\Omega\subseteq\bbC[x_1,\dots,x_n]_d$ such that, for every $f\in\Omega$,
\[
\rk f=\biggl\lceil\frac{1}{n}\binom{d+n-1}{d}\biggr\rceil\sim n^{d-1}.
\]
More recently, M.~C.~Brambilla and G.~Ottaviani provided a shorter version of the proof in \cite{BO08}, which we refer to for the details.

In the present paper, we focus on the decompositions and Waring rank of powers of quadratic forms.
In general, determining the rank of $q_n^s$ is a hard problem.  While in the case of binary forms the problem is quite simple and completely solved (\cite{Rez92}*{Theorem 9.5}), we cannot say the same for the general case in more variables, about which there is not much information in the literature. 
However, we focus on the asymptotic behavior of the rank of $q_n^s$, which grows as $n^s$, with $n\to+\infty$.

A particularly remarkable fact about quadratic forms is their invariance under the action of orthogonal groups. In particular, this implies that any decomposition of the form $q_n^s$ has an $n$-dimensional orbit under the action of the orthogonal group $\Oa_n(\bbC)$.
The most complete study of this subject so far has been given by B.~Reznick, who in \cite{Rez92} provides a precise survey of both classical and more original results over the field $\bbR$. In particular, B.~Reznick concentrates in his notes on real Waring decompositions, which he calls \textit{representations}. 
The author also lists several applications of decompositions of powers of quadratic forms in \cite{Rez92}*{Section 8}, including their use in number theory for studying the Waring problem and in functional analysis.
The quadratic form $q_n$, in terms of differential operators, is exactly the well-known Laplace operator
\[
\Lap=\pdv[2]{}{x_1}+\cdots+\pdv[2]{}{x_n},
\]
which also plays an important role also in mathematical analysis.
From an algebraic point of view, the equality
\[
S^d\bbC^n=\bigoplus_{j=0}^{\left\lfloor\frac{d}{2}\right\rfloor}q_n^j\calH_{n,d-2j},
\]
where, $\calH_{n,d-2j}$ denotes the space of homogeneous harmonic polynomials of degree $d-2j$ in $n$ variables, for every $j\in\bbN$, provides a decomposition of great relevance. This elegant formula, analyzed in detail by R.~Goodman and N.~R.~Wallach in \cite{GW98}*{Corollary 5.2.5}, gains much importance by considering its invariance under the action of the complex orthogonal group $\Oa_n(\bbC)$. 

In \autoref{sec_powers_of_quadrics}, we present some of the most important decompositions analyzed by B.~Reznick, focusing on closed formulas that provide a family of decompositions depending on the number of variables. In general, these formulas do not correspond to minimal decompositions, but can provide insights into how the rank grows as the number of variables tends to infinity. We also examine the cases in which we can establish that the rank of $q_n^s$ is subgeneric. A first example is given by the formula
\begin{equation}
\label{rel_decom_Rezn_q_n^2}
6q_n^2=\sum_{i_1<i_2}(x_{i_1}\pm x_{i_2})^4+2(4-n)\sum_{i_1}x_{i_1}^4,
\end{equation}
which is a decomposition of size $n^2$, for $n\neq 4$, presented by B.~Reznick in \cite{Rez92}*{formula (8.33)}. As we will see in \autoref{teo_minimal_decomposition_q_n^2}, this formula does not provide minimal decompositions for every $n\in\bbN$. However, it exhibits an elegant pattern and a structure that is invariant under the action of the permutation group $\mfS_n$. Furthermore, it yields subgeneric decompositions for $n>17$. 
This formula has already been taken into consideration by J.~Buczyński, K.~Han, M.~Mella, and Z.~Teitler in \cite{BHMT18}*{section 4.5}, where they provide a similar decomposition for the exponent $3$. The latter follows the same pattern of points (cf.~\autoref{prop_BHMT18_q_n^3}). It is given by the formula 
\begin{equation}
\label{rel_decomp_BHMT18_q_n^3}
60q_n^3=\sum_{i_1<i_2<i_3}(x_{i_1}\pm x_{i_2}\pm x_{i_3})^6+2(5-n)\sum_{i_1<i_2}(x_{i_1}\pm x_{i_2})^6+2(n^2-9n+38)\sum_{i_1}x_{i_1}^6.
\end{equation}
We generalize this pattern in \autoref{sec_further examples}, providing other formulas for higher powers. Unfortunately, describing the pattern is not as easy as one might expect, since linear forms involving only $1$ and $-1$ as coefficients are not sufficient to cover the coefficients of all monomials of degree $2s$. 
To obtain a general formula for any $s$-th power, it is necessary to consider other types of linear forms. In order to do this, we need to recall the classical notion of $k$-partition function for every $k\in\bbN$. This function, denoted by $\p_k\colon\bbN\to\bbN$, associates every natural number $s$ with the number of partitions of $s$ into exactly $k$ parts. Based on this, we provide an upper bound for the rank of $q_n^s$, which is summarized in the following theorem, which is proved in \autoref{sec_Asymptotic_growing}.
\begin{teo}
\label{cor_upper_bound_quadrics}
For every $n,s\in\bbN$
\begin{equation}
\label{rel_formula_upper_bound_corollary}
\rk(q_n^s)\leq 2^{s-1}\binom{n}{s}+2^{s-2}\binom{n}{s-1}+\sum_{k=1}^{s-2}2^{k-1}k!\p_k(s)\binom{n}{k}.
\end{equation}
In particular, for any $s\in\bbN$, the rank of $q_n^s$ grows at most as $n^s$. 
\end{teo}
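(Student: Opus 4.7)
The plan is to construct an explicit Waring decomposition of $q_n^s$ whose size equals the right-hand side of~\eqref{rel_formula_upper_bound_corollary}. The shape of this decomposition is dictated by the Reznick-type formulas~\eqref{rel_decom_Rezn_q_n^2} and~\eqref{rel_decomp_BHMT18_q_n^3}, and it would be organized by the \emph{support size} $k\in\{1,\ldots,s\}$ of the participating linear forms. For each $k$-subset $I=\{i_1<\cdots<i_k\}\subseteq\{1,\ldots,n\}$, each partition $\nu\vdash s$ into exactly $k$ positive parts, each ordering $\sigma$ of the parts of $\nu$, and each sign vector $\epsilon\in\{\pm 1\}^{k}/\{\pm 1\}$, I would include in the decomposition the linear form
\[
l_{I,\nu,\sigma,\epsilon}=\epsilon_1\,\nu_{\sigma(1)}\,x_{i_1}+\cdots+\epsilon_k\,\nu_{\sigma(k)}\,x_{i_k}.
\]
Counting these gives, on each stratum $k\le s-2$, the term $2^{k-1}\,k!\,\p_k(s)\binom{n}{k}$. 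For the two top strata the only available partitions are $(1,\ldots,1)$ and $(2,1,\ldots,1)$: in the first case all orderings produce the same form, yielding $2^{s-1}\binom{n}{s}$ distinct forms, while in the second the position of the ``$2$'' is already absorbed into the subset choice (after using the sign symmetry $\epsilon\sim-\epsilon$), yielding $2^{s-2}\binom{n}{s-1}$ forms.

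Next, I would show that $q_n^s$ actually decomposes as a linear combination of the $(2s)$-th powers $l_{I,\nu,\sigma,\epsilon}^{2s}$ whose coefficient depends only on $\nu$, which is compatible with the $\mfS_n\times\{\pm 1\}^n$-invariance of $q_n^s$. Expanding each $l_{I,\nu,\sigma,\epsilon}^{2s}$ by the multinomial theorem and summing first over the signs $\epsilon$ kills every odd-exponent monomial; then summing over $\sigma$ and over $I$ reassembles the result as a linear combination of the monomial symmetric polynomials $m_{2\mu}$ in $x_1,\ldots,x_n$ indexed by partitions $\mu\vdash s$. Matching this against
\[
q_n^s=\sum_{\mu\vdash s}\frac{s!}{\mu_1!\cdots\mu_k!}\,m_{2\mu}
\]
reduces the problem to a finite linear system indexed by partitions of $s$, whose entries are products of multinomial coefficients $\binom{2s}{2\mu_1,\ldots,2\mu_k}$ weighted by the power products $\prod_j\nu_j^{2\mu_{\sigma(j)}}$ summed over orderings.

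The main obstacle is verifying the solvability of this linear system. My intended route is inductive, from the top stratum downwards: the $k=s$ forms are the only ones contributing to the orbit $m_{(2,\ldots,2)}$, so one first solves for their coefficient; after subtracting, the remainder is supported on monomials using at most $s-1$ distinct variables, reducing the problem to one with strictly smaller support size. Iterating the peel-off argument down to $k=1$, where $x_i^{2s}$ is trivially proportional to itself, closes the induction; this mirrors exactly the computation B.~Reznick performs for $q_n^2$ leading to~\eqref{rel_decom_Rezn_q_n^2}. Once the decomposition is established, the bound~\eqref{rel_formula_upper_bound_corollary} follows by summing the sizes of the strata, and the asymptotic statement is immediate: since $\binom{n}{k}=O(n^k)$ with coefficients independent of $n$, the dominant contribution is $2^{s-1}\binom{n}{s}\sim\frac{2^{s-1}}{s!}\,n^s$ while every other summand is $O(n^{s-1})$, so $\rk(q_n^s)=O(n^s)$.
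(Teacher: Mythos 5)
Your proposal follows the right structural skeleton — organize the decomposition by support size $k$, use $\mfS_n$- and sign-symmetric families of linear forms, match coefficients against the monomial symmetric polynomials $M_{2\bfm}$, and observe that the resulting linear system is block-upper-triangular with respect to $k$. That is indeed the strategy of the paper. However, two steps do not survive scrutiny.

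First, the count at stratum $k=s-1$. You propose to use the parts of the unique partition $\nu=(2,1,\ldots,1)\in\calP_{s-1}(s)$ as the coefficients of the linear forms, and you assert that ``the position of the $2$ is already absorbed into the subset choice.'' It is not: on a fixed $(s-1)$-subset $I$, the forms $2x_{i_1}+x_{i_2}+\cdots$, $x_{i_1}+2x_{i_2}+\cdots$, etc.\ are genuinely distinct, and the orbit of $(2,1,\ldots,1)$ under $\mfS_{s-1}$ has size $s-1$, not $1$. So your construction produces $(s-1)\,2^{s-2}\binom{n}{s-1}$ forms on that stratum, which exceeds the claimed $2^{s-2}\binom{n}{s-1}$. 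The paper achieves the smaller count precisely by \emph{not} using the partition as the point: since $\p_{s-1}(s)=1$, only one linear condition must be met at this level, and the single point $\bfa_{s-1,1}=(1,\ldots,1)$ — which has full stabilizer $\mfS_{s-1}$ — produces only $2^{s-2}\binom{n}{s-1}$ distinct forms. The same trick is used at $k=s$. You reproduce the right answer for $k=s$ only because there every permutation of $(1,\ldots,1)$ coincides.

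Second, and more substantially, the solvability of the system. Your peel-off induction correctly identifies that the level-$s$ forms alone govern the coefficient of $M_{(2,\ldots,2)}$, and that subtracting them reduces the support size. But at a general stratum $k$ the residual problem is a $\p_k(s)\times\p_k(s)$ linear system, not a $1\times 1$ one: for each of the $\p_k(s)$ orbits $M_{2\bfm}$ with $\bfm\in\calP_k(s)$ you must match a coefficient, and you have $\p_k(s)$ free scalars (one per chosen point). Your argument never shows that this block is invertible for your specific choice of coefficient vectors (the parts of the partitions). The paper resolves exactly this difficulty with a separate lemma (\autoref{lem_det_nonzero}): it proves that the functions $H_{k,\lambda}$ obtained by evaluating the monomial symmetric polynomials on the coefficient vector are linearly independent, which guarantees the block determinant is a \emph{nonzero polynomial} in the coordinates of the points, hence nonvanishing for a generic choice. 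The theorem is stated for generic points at levels $k\le s-2$, and for the explicit point $(1,\ldots,1)$ only at levels $s-1$ and $s$, where the block is $1\times 1$. If you insist on the partition-parts choice, you would need to verify by hand that each $\p_k(s)\times\p_k(s)$ determinant is nonzero, which is exactly the part the paper avoids by the genericity argument; the paper's own $s=4$ example even replaces the partitions $(3,1),(2,2)$ at level $k=2$ by the points $(2,1)$ and $(1,1)$ rather than using the partition parts, suggesting the authors do not expect that choice to behave uniformly.
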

Given any $s\in\bbN$, the generic rank of forms of degree $2s$ grows as $n^{2s-1}$ when $n\to +\infty$. Hence, by the theorem of J.~Alexander and A.~Hirschowitz (\cite{AH95}) mentioned above, the last theorem guarantees that for any $s\in\bbN$, the rank can be generic or supergeneric only for a finite number of possible values of $n$.  

For any form $f$, the \textit{border rank} of $f$ , denoted by $\brk f$, is the smallest natural number $r\in\bbN$ such that $f$ is in the Zariski closure of a set of homogeneous polynomials having rank equal to or less than $r$.

Thanks to a result due to B.~Reznick in \cite{Rez92} (see \autoref{prop_lower_bound_catalecticant_powers_quadrics}), we know that
\begin{equation}
\label{rel_lower_bound_catalecticant_Reznick}
\rk(q_n^s)\geq\brk(q_n^s)\geq\binom{s+n-1}{s}.
\end{equation}
Putting together \autoref{cor_upper_bound_quadrics} and inequality \eqref{rel_lower_bound_catalecticant_Reznick}, we have the following result, which is also proved in \autoref{sec_Asymptotic_growing}.
\begin{cor}
\label{cor_log_limit}
For every $s\in\bbN$,
\[
\lim_{n\to +\infty}\log_n\bigl(\rk(q_n^s)\bigr)=\lim_{n\to +\infty}\log_n\bigl(\brk(q_n^s)\bigr)=s.
\]
\end{cor}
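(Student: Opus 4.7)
The plan is to prove \autoref{cor_log_limit} by a direct squeeze argument, combining the upper bound from \autoref{cor_upper_bound_quadrics} with the lower bound \eqref{rel_lower_bound_catalecticant_Reznick}, and then passing to the limit of $\log_n$ of both sides.

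More precisely, I would fix $s\in\bbN$ and write the chain of inequalities
\[
\binom{s+n-1}{s}\leq\brk(q_n^s)\leq\rk(q_n^s)\leq 2^{s-1}\binom{n}{s}+2^{s-2}\binom{n}{s-1}+\sum_{k=1}^{s-2}2^{k-1}k!\,\p_k(s)\binom{n}{k},
\]
where the leftmost inequality is \eqref{rel_lower_bound_catalecticant_Reznick} and the rightmost one is \autoref{cor_upper_bound_quadrics}. Both the upper and lower bounds are polynomials in $n$ of degree exactly $s$ (with $s$ fixed). Indeed, for fixed $s$,
\[
\binom{s+n-1}{s}=\frac{n^s}{s!}+O(n^{s-1}),\qquad 2^{s-1}\binom{n}{s}+2^{s-2}\binom{n}{s-1}+\sum_{k=1}^{s-2}2^{k-1}k!\,\p_k(s)\binom{n}{k}=\frac{2^{s-1}}{s!}n^s+O(n^{s-1}),
\]
so both extremes are asymptotically equivalent to a positive constant times $n^s$.

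The final step is the elementary observation that, for any polynomial $P(n)$ whose leading term is $c\,n^s$ with $c>0$, one has
\[
\log_n P(n)=s+\log_n\!\left(\frac{P(n)}{n^s}\right)\xrightarrow[n\to+\infty]{}s,
\]
since $P(n)/n^s\to c$ and $\log_n c\to 0$. Applying this to both the lower and the upper bound above, and then sandwiching $\log_n\bigl(\brk(q_n^s)\bigr)$ and $\log_n\bigl(\rk(q_n^s)\bigr)$ in between, yields
\[
\lim_{n\to+\infty}\log_n\bigl(\brk(q_n^s)\bigr)=\lim_{n\to+\infty}\log_n\bigl(\rk(q_n^s)\bigr)=s,
\]
which is the claim. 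There is no real obstacle here: once \autoref{cor_upper_bound_quadrics} and \eqref{rel_lower_bound_catalecticant_Reznick} are in hand, the only thing to check is that the two bounds have matching polynomial degree in $n$, which they do (degree $s$ on each side), making the squeeze immediate.
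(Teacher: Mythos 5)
Your proposal is correct and follows essentially the same route as the paper: squeeze $\log_n\bigl(\brk(q_n^s)\bigr)$ and $\log_n\bigl(\rk(q_n^s)\bigr)$ between $\log_n\binom{s+n-1}{s}$ and $\log_n$ of the upper bound from \autoref{cor_upper_bound_quadrics}, then note both extremes are degree-$s$ polynomials in $n$ with positive leading coefficient, so each $\log_n$ tends to $s$. The paper's write-up does the same thing, just computing the two limits separately rather than invoking a single observation about polynomials.
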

This fact agrees perfectly with the value of the border rank determined in \cite{Fla22}*{Theorem 4.5} for ternary non-degenerate quadratic forms, which turns out to be equal to the rank of the middle catalecticant matrix, that is,
\[
\brk(q_3^s)=\binom{s+2}{2}.
\]

In \autoref{sec:subgeneric_rank}, we provide an estimate for the maximum value $n_s\in\bbN$ for which the rank of $q_n^s$ is subgeneric for every $n>n_s$. 
We state this in the following theorem.
\begin{teo}
\label{teo_subgeneric_rank}
For every $s\in\bbN$, whenever
$n>(2s-1)^2$,
\[
\rk(q_n^s)<\frac{1}{n}\binom{2s+n-1}{2s}.
\]
In particular, the rank of $q_n^s$ is subgeneric, whenever
$n>(2s-1)^2$.
\end{teo}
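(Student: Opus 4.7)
The plan is to combine the upper bound from \autoref{cor_upper_bound_quadrics}, denoted by $U(n,s)$, with the explicit polynomial expression $\frac{1}{n}\binom{2s+n-1}{2s} = \frac{(n+1)(n+2)\cdots(n+2s-1)}{(2s)!}$ and show that $U(n,s) < \frac{1}{n}\binom{2s+n-1}{2s}$ whenever $n>(2s-1)^2$. Both sides are polynomials in $n$: the right-hand side has degree $2s-1$ with leading coefficient $\frac{1}{(2s)!}$, whereas $U(n,s)$ has degree only $s$ with leading coefficient $\frac{2^{s-1}}{s!}$. Thus for $s\geq 2$ asymptotic dominance is automatic, and the whole proof amounts to pinning down the crossover explicitly.

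First I would bound the right-hand side of \eqref{rel_formula_upper_bound_corollary} uniformly as a single multiple of $\binom{n}{s}$: using $\binom{n}{k}\leq\binom{n}{s}$ for $k\leq s$ (valid once $n\geq 2s$) together with $\p_k(s)\leq\p(s)$, one obtains $U(n,s)\leq C(s)\binom{n}{s}$ for an explicit constant $C(s)$ depending only on $s$. On the other side, rewriting
\[
\frac{1}{n}\binom{2s+n-1}{2s}\Big/\binom{n}{s} \;=\; \frac{s!}{(2s)!}\cdot\frac{(n+1)(n+2)\cdots(n+2s-1)}{n(n-1)\cdots(n-s+1)},
\]
each of the $s$ quotients $\frac{n+s+j}{n-j}$ (for $j=0,\dots,s-1$) exceeds $1$, and the remaining product $(n+1)(n+2)\cdots(n+s-1)$ is bounded below by $n^{s-1}$. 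The desired inequality thus reduces to a numerical estimate of the form $n^{s-1}>C(s)\cdot(2s)!/s!$, which can be verified under the hypothesis $n>(2s-1)^2$ by standard bounds such as $(2s)!/s!\leq(2s)^{s}$ and an elementary majorization of $C(s)$.

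The main obstacle I anticipate is that for small values of $s$ — notably $s=2$, where \autoref{cor_upper_bound_quadrics} delivers only $\rk(q_n^2)\leq n^2$, a bound that does not drop below $\frac{1}{n}\binom{n+3}{4}$ until roughly $n\geq 18$ — Theorem~\ref{cor_upper_bound_quadrics} alone does not reach the claimed threshold $(2s-1)^2$. In those cases the argument must invoke tighter decompositions obtained earlier in the paper, such as the minimal decomposition of $q_n^2$ in \autoref{teo_minimal_decomposition_q_n^2} or the refined constructions for $q_n^3$ based on \eqref{rel_decomp_BHMT18_q_n^3}, to cover the intermediate range of $n$. Once the strict inequality $\rk(q_n^s)<\frac{1}{n}\binom{2s+n-1}{2s}$ is in hand, the subgenericity conclusion follows at once from the Alexander--Hirschowitz theorem, since the exceptional list in degree $2s\geq 4$ is confined to values of $n$ well below $(2s-1)^2$.
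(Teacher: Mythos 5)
Your plan follows the same skeleton as the paper's proof: use a degree-$s$ polynomial upper bound on $\rk(q_n^s)$ from the earlier sections, compare it against the degree-$(2s-1)$ generic count $\frac{1}{n}\binom{2s+n-1}{2s}$, and pin down the crossover. The main departures are cosmetic rather than structural. You start from the sharper bound in \autoref{cor_upper_bound_quadrics}, whereas the paper works from the cruder bound of \autoref{teo_upper_bound_factorial} (namely $\rk(q_n^s)\leq 2^s\,\frac{n!}{(n-s)!}\,\p(s)$) and compensates with Stirling's inequalities \eqref{rel_Stirling_formula}--\eqref{rel_binomial_approximation_Stirling} and Kane's bound \eqref{rel_upper_bound_p(n)} on $\p(s)$. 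Your ratio reduction $n^{s-1}>C(s)\cdot(2s)!/s!$ is the same idea, written more directly.

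The one step you should not treat as routine is the phrase ``an elementary majorization of $C(s)$.'' The constant $C(s)$ contains the term $\sum_{k=1}^{s-2}2^{k-1}k!\,\p_k(s)$, which grows factorially in $s$, so under the hypothesis $n>(2s-1)^2$ the inequality $n^{s-1}>C(s)(2s)!/s!$ is by no means immediate for all $s$; the paper's own careful estimates establish the analogous inequality analytically only for $s\geq 95$, and then resort to a computer check for $6\leq s\leq 94$ after proving a monotonicity lemma reducing the verification to the single value $n=(2s-1)^2+1$. Your proposal will need a comparable split into an asymptotic regime and a finite computational regime, and it should not silently assume the ``standard bounds'' suffice uniformly. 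You have correctly identified the second issue: \autoref{cor_upper_bound_quadrics} alone fails for $s=2$ on the range $10\leq n\leq 17$, and one must fall back on the sharper explicit decompositions (e.g.\ \autoref{teo_minimal_decomposition_q_n^2} of size roughly $\tfrac12 n^2$, and Propositions \ref{prop_subgeneric_rank_q_n^3}, \ref{prop_subgeneric_rank_q_n^4}, \ref{prop_subgeneric_rank_q_n^5} for $s=3,4,5$). The paper's written proof is also silent about $s\leq 5$, so this gap is real in both arguments and should be filled by an explicit appeal to those propositions.
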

This result partially solves the problem posed by J.~Buczyński, K.~Han, M.~Mella, and Z.~Teitler in \cite{BHMT18}*{section 4.5}, who asked for the values of $n$ and $s$ for which the rank of $q_n^s$ is subgeneric. Moreover, the authors provide an analysis 
on the maximal rank locus $W_{\mathrm{max},d}$, that is, the set of homogeneous polynomials of degree $d$ having maximum rank. In particular, they prove the following theorem.
\begin{teo}[\cite{BHMT18}*{Theorem 4.1}]
\label{teo_BHTM18_component_maximal_rank_loci}
Let $V$ be a finite dimensional vector space with $\dim V=n$ such that $n\geq 3$. If $W$ is an irreducible component of the rank locus $W_{\mathrm{max},d}$, then \[
\dim(W)\geq\binom{n+1}{2}-1.
\]
Moreover, if
\[
\dim(W)=\binom{n+1}{2}-1,
\]
then $d$ is even, and $W$ is the set of all the $\bigl(d/2\bigr)$-th powers of quadrics.
\end{teo}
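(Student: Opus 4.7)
The proof is built on the $\GL_n(\bbC)$-invariance of the Waring rank: since $g\cdot\sum_j l_j^d=\sum_j (g\cdot l_j)^d$, the rank is constant along $\GL_n$-orbits on $\bbP(S^d V)$, so $W_{\mathrm{max},d}$ is a $\GL_n$-stable constructible set and every irreducible component $W$ is $\GL_n$-invariant. Being irreducible and $\GL_n$-invariant, $W$ contains a dense open $\GL_n$-orbit, and for a generic $f\in W$ one has
\[
\dim W \;=\; (n^2-1)-\dim\Stab_{\mathrm{PGL}_n(\bbC)}(f).
\]
Since $\dim \mathrm{PO}_n(\bbC)=\binom{n}{2}$, the inequality $\dim W\geq\binom{n+1}{2}-1$ is equivalent to the stabilizer bound $\dim\Stab_{\mathrm{PGL}_n(\bbC)}(f)\leq\binom{n}{2}$.

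I would prove this bound by contradiction. Suppose $f\in W_{\mathrm{max},d}$ has identity stabilizer component $H\subseteq \mathrm{PGL}_n(\bbC)$ with $\dim H>\binom{n}{2}$. By the Dynkin classification of maximal closed connected subgroups of $\mathrm{PGL}_n(\bbC)$, $H$ is contained in either (i)~a proper parabolic subgroup (the stabilizer of a nontrivial flag in $\bbP(V)$), or (ii)~one of a short explicit list of reductive subgroups, including the normalizer of $\mathrm{PSp}_n(\bbC)$ when $n$ is even, block-diagonal images of $\mathrm{PGL}_a(\bbC)\times\mathrm{PGL}_{n-a}(\bbC)$, and tensor-product embeddings. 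For each case I would show that every $H$-invariant degree-$d$ form has rank strictly less than the maximum over $S^d V$. In the parabolic case, invariance forces $f$, after a change of coordinates, to be supported on a proper linear subspace of $V$, so $\rk f$ is bounded by the max rank of forms in strictly fewer variables, which is strictly smaller for $n\geq 3$. In the reductive cases, the space of $H$-invariant degree-$d$ forms is very low-dimensional, and one can produce explicit decompositions using the $H$-orbit structure on $\bbP(V)$ that achieve strictly subgeneric rank.

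For the equality case $\dim W=\binom{n+1}{2}-1$, the generic stabilizer has dimension exactly $\binom{n}{2}$. The same classification singles out conjugates of $\mathrm{PSO}_n(\bbC)$ as essentially the only connected closed subgroups of $\mathrm{PGL}_n(\bbC)$ of this exact dimension that are not contained in a larger subgroup already ruled out by the previous step. The classical first fundamental theorem of invariant theory for the orthogonal group then identifies the $\SO_n(\bbC)$-invariants of $\bbC[x_1,\dots,x_n]$ as the polynomial algebra generated by $q_n=x_1^2+\cdots+x_n^2$, so $f$ must be a scalar multiple of $q_n^k$ for some $k\in\bbN$. This forces $d=2k$ to be even, and $W$ coincides with the $\GL_n$-orbit closure of $q_n^{d/2}$, i.e.\ the set of all $(d/2)$-th powers of quadrics (including degenerate ones), which is irreducible of dimension $\binom{n+1}{2}-1$.

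The main obstacle is the case analysis in the second paragraph: for each connected subgroup $H$ of dimension greater than $\binom{n}{2}$ produced by the Dynkin classification, one must verify the sharp claim that every $H$-invariant degree-$d$ form lies in a lower secant variety $\sigma_r(\nu_d(\bbP V))$ with $r$ strictly less than the maximum rank in $S^d V$. The parabolic cases reduce to a dimension count in fewer variables, but the reductive subgroups whose dimension barely exceeds $\binom{n}{2}$ (such as $\mathrm{PSp}_n$ for even $n$) are more subtle and likely require either explicit invariant-theoretic constructions or representation-theoretic control of the highest weights appearing in the decomposition of $S^d V$ under $H$.
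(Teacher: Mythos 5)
This theorem is only cited in the paper (it is Theorem~4.1 of \cite{BHMT18}, stated here without proof), so there is no internal argument to compare against. The argument in \cite{BHMT18} itself is not group-theoretic: it rests on the rank-jump lemma $\rk(f+l^{d})\in\{\rk f-1,\rk f,\rk f+1\}$ together with a join/secant dimension count, showing that near a generic point $W_{\mathrm{max},d}$ is swept out by an $(n-1)$-parameter family of lines joining it to the Veronese variety, and this forces $\dim W\geq\binom{n+1}{2}-1$ and identifies the equality case. Your invariant-theoretic strategy is therefore a genuinely different route, but as written it is a research program rather than a proof, and it contains errors beyond the case analysis you yourself flag as undone.

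Concretely: (a) It is false that an irreducible $\GL_n$-invariant set must contain a dense orbit; $\bbP(S^{d}V)$ is itself irreducible and invariant with no dense orbit for $d\geq 3$, $n\geq 2$. For the inequality you only need the one-sided estimate $\dim W\geq (n^{2}-1)-\dim\Stab_{\mathrm{PGL}_n(\bbC)}(f)$, which holds unconditionally, and in the equality case the dense orbit should be \emph{deduced} once the stabilizer bound is proved; the write-up instead assumes it. (b) Dynkin's theorem classifies \emph{maximal} connected subgroups, whereas you must exclude \emph{every} connected $H\leq\mathrm{PGL}_n(\bbC)$ with $\dim H>\binom{n}{2}$, and there are non-maximal such $H$ (for instance the block-diagonal image of $\mathrm{SL}_{n-1}$ already has dimension $n^2-2n>\binom{n}{2}$ once $n\geq 4$). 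Knowing $H\subseteq M$ with $M$ maximal does not help directly, since $H$-invariant forms strictly contain $M$-invariant forms when $H\subsetneq M$. (c) The parabolic sub-case is misdescribed: a projectively $P$-invariant form need not be supported on the $P$-stable subspace. For $P$ the stabilizer of $\langle x_1,\dots,x_{n-1}\rangle$, invariance under the unipotent radical forces $\partial f/\partial x_i=0$ for $i<n$, so the semi-invariant line is $\langle x_n^{d}\rangle$, which lies entirely outside $S^{d}\langle x_1,\dots,x_{n-1}\rangle$; the form happens to have rank $1$, so the conclusion survives, but the stated mechanism (restriction to fewer variables plus a max-rank bound there) is not what is doing the work. (d) The reductive sub-cases ($\mathrm{PSp}_n$ for $n$ even, Levi factors, tensor-product embeddings) are left entirely open, and these are precisely where the claim that $H$-invariant forms cannot attain maximal rank requires genuine proof rather than plausibility.
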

 With the result of \autoref{teo_subgeneric_rank}, we improve this last statement.
By \autoref{teo_subgeneric_rank}, we can state, in particular, that the second condition of \autoref{teo_BHTM18_component_maximal_rank_loci} does not hold for $n>(2s-1)^2$.
\begin{cor}
\label{cor:improvement_result}
For every $s\in\bbN$ and for every $n\geq 3$, if $n>(2s-1)^2$ and $W$ is an irreducible component of $W_{\mathrm{max},2s}$, then
\[
\dim(W)\geq\binom{n+1}{2}.
\]
\end{cor}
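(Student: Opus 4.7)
The plan is to derive \autoref{cor:improvement_result} as a direct consequence of \autoref{teo_subgeneric_rank} combined with the dichotomy supplied by \autoref{teo_BHTM18_component_maximal_rank_loci}. Let $W$ be an irreducible component of $W_{\mathrm{max},2s}$ with $n\geq 3$ and $n>(2s-1)^2$. The first conclusion of \autoref{teo_BHTM18_component_maximal_rank_loci} already yields $\dim(W)\geq\binom{n+1}{2}-1$, and the second conclusion characterizes the borderline case $\dim(W)=\binom{n+1}{2}-1$ as the unique situation in which $W$ coincides with the set of all $s$-th powers of quadratic forms in $n$ variables. Hence the whole argument reduces to excluding this borderline case under the hypothesis $n>(2s-1)^2$.

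To exclude it, I show that the set of $s$-th powers of quadrics is not contained in $W_{\mathrm{max},2s}$ by exhibiting a single element of this set, namely $q_n^s$, that fails to achieve the maximum rank of forms of degree $2s$ in $n$ variables. By a standard parameter count, any $r$-term Waring decomposition depends on at most $rn$ scalars while the ambient space of degree $2s$ forms has dimension $\binom{2s+n-1}{2s}$, so the generic rank, and \emph{a fortiori} the maximum rank, is at least $\frac{1}{n}\binom{2s+n-1}{2s}$; in particular no appeal to the sharp Alexander--Hirschowitz formula is necessary here. On the other hand, \autoref{teo_subgeneric_rank} yields
\[
\rk(q_n^s)<\frac{1}{n}\binom{2s+n-1}{2s}.
\]
Comparing the two inequalities, $\rk(q_n^s)$ is strictly less than the maximum rank, so $q_n^s\notin W_{\mathrm{max},2s}$. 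Since every irreducible component of $W_{\mathrm{max},2s}$ is by definition contained in $W_{\mathrm{max},2s}$, the set of $s$-th powers of quadrics cannot be such a component, the borderline case is excluded, and $\dim(W)\geq\binom{n+1}{2}$ as required.

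I expect no real obstacle: all the deep content is already carried by \autoref{teo_subgeneric_rank}, and the present corollary is a clean two-line combination of it with \autoref{teo_BHTM18_component_maximal_rank_loci}. The only point worth flagging is the use of the elementary lower bound on the generic rank, chosen precisely to avoid the finite list of exceptional cases in \cite{AH95}.
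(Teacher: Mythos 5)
Your proof is correct and follows essentially the same route the paper indicates in the sentence preceding the corollary: invoke \autoref{teo_subgeneric_rank} to show $\rk(q_n^s)$ is strictly less than the maximum rank, hence the set of $s$-th powers of quadrics cannot equal $W_{\mathrm{max},2s}$, and then the dichotomy in \autoref{teo_BHTM18_component_maximal_rank_loci} forces $\dim(W)\geq\binom{n+1}{2}$. The only thing you add beyond the paper's one-line justification is an explicit verification that the maximum rank exceeds the parameter-count bound $\frac{1}{n}\binom{2s+n-1}{2s}$, which the paper leaves implicit; this is a welcome clarification, not a deviation.
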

\section{Powers of quadrics and classical decompositions}
\label{sec_powers_of_quadrics}
Thorough the paper, we will use the notation
\[
\sum(a_1x_1\pm\cdots\pm a_nx_n)^m
\]
to denote a summation of all the $2^{n-1}$ possible choices of the coefficients $+1$ and $-1$, multiplying $a_2,\dots,a_n$. In other words, 
\[
\sum(a_1x_1\pm\cdots\pm a_nx_n)^m=\sum_{\substack{j_i\in\{0,1\}\\i=2,\dots,n}}\bigl(a_1x_1+(-1)^{j_2}a_2x_2+\cdots+(-1)^{j_n}a_nx_n\bigr)^m,
\]
where $a_1,\dots,a_n\in\bbC$ and $m\in\bbN$. We will also use the notation 
\[
\sum_{i_1<\cdots<i_k}(a_1x_{i_1}\pm\cdots \pm a_k x_{i_k})^m
\]
to denote a summation over any choice of $k$ variables $x_{i_1},\dots,x_{i_k}$ such that
$1\leq i_1<\cdots<i_k\leq n$, 
where $a_1,\dots,a_n\in\bbC$ and $m\in\bbN$. In the case where $k>n$, we assume the terms $x_{i_{n+1}},\dots,x_{i_k}$ to be $0$, so that in this case, we have
\[
\sum_{i_1<\cdots<i_k}(a_1x_{i_1}\pm\cdots \pm a_k x_{i_k})^m=\sum_{i_1<\cdots<i_n}(a_1x_{i_1}\pm\cdots \pm a_k x_{i_n})^m.
\]
Determining decompositions of the polynomial $q_n^s$, for any $n,s\in\bbN$, is a classical problem. 
Many decompositions appear in both old and more recent literature. B.~Reznick provides an overview of classical results, along with new decompositions, in \cite{Rez92}*{chapters 8-9}. Here are some examples.

Using the notation
$\bfx^{\bfalpha}=x_1^{\alpha_1}\cdots x_n^{\alpha_n}$,
for every multi-index $\bfalpha=(\alpha_1,\dots,\alpha_n)\in\bbN^n$, the \textit{catalecticant map} of a homogeneous polynomial $f$ of degree $d$ is obtained by extending the map defined on monomials as
\[
\begin{tikzcd}[row sep=0pt,column sep=1pc]
\Cat_f\colon \bbC[y_1\dots,y_n]\arrow{r} & \bbC[x_1\dots,x_n]\hphantom{.} \\
  {\hphantom{\Cat_f\colon{}}} \bfy^{\bfalpha} \arrow[mapsto]{r} &\displaystyle \frac{\pder^{\abs{\bfalpha}}f}{\pder\bfx^{\bfalpha}}.
\end{tikzcd}
\]

Thanks to the catalecticant map, it is possible to establish a general inequality that provides a well-known lower bound for the Waring rank, related to catalecticant matrices and apolarity (see \cite{IK99} for an overview on this subject).
We denote by $\Cat_f^k$ the $k$-th component of catalecticant map of $f$, which is the restriction 
\[
\Cat_f^k\colon\bbC[y_1,\dots,y_n]_k\to\bbC[x_1,\dots,x_n]_{d-k}
\]
of the catalecticant map of $f$ to the component of degree $k$ of $\bbC[x_1,\dots,x_n]$. Then we have
\begin{equation}
\label{rel_lower_bound}
\rk f\geq\brk f\geq\rk\bigl(\Cat_f^k\bigr)
\end{equation}
for every $k\in\bbN$. This inequality is classically attributed to J.~J.~Sylvester for binary forms (see \cite{Syl51}) and was then generalized for an arbitrary number of variables (see \cite{IK99}*{page 11}). It appears several times in the literature (see for example \cite{Lan12}*{Proposition 3.5.1.1}).
In particular, B.~Reznick shows that all the catalecticant matrices of $q_n^s$ are full rank, using \cite{Rez92}*{Theorem 8.15} and referring to \cite{Rez92}*{Theorems 3.7 and 3.16}. A further proof of this fact is provided by F.~Gesmundo and J.~M.~Landsberg in \cite{GL19}*{Theorem 2.2}. Moreover, it follows directly from the structure of the kernel of the catalecticant map of $q_n^s$, also known as the \textit{apolar ideal} of $q_n^s$ and denoted by $(q_n^s)^{\perp}$. In particular, as shown in \cite{Fla22}*{Theorem 3.8}, we have that
$(q_n^s)^\perp=(\calH_n^{s+1})$,
where $\calH_n^{s+1}$ is the space of harmonic polynomials of degree $s+1$.
Therefore, as a direct consequence, we have that the $k$-th component of $(q_n^s)^\perp$ is equal to $0$ for every $k\leq s$, and hence, in particular, $\Cat_{q_n^s}^s$ has maximum rank. Thus, we immediately obtain the following proposition.
\begin{prop}[B.~Reznick]
\label{prop_lower_bound_catalecticant_powers_quadrics}
For every $n,s\in\bbN$,
\begin{equation*}
\rk (q_n^s)\geq\brk (q_n^s)\geq\rk\bigl(\Cat_{q_n^s}^s\bigr)=\binom{s+n-1}{s}.
\end{equation*}
\end{prop}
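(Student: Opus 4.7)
The plan is to combine the Sylvester-type catalecticant lower bound in inequality \eqref{rel_lower_bound} with the known structure of the apolar ideal of $q_n^s$. First I would apply \eqref{rel_lower_bound} with $k=s$, which immediately gives
\[
\rk(q_n^s)\geq\brk(q_n^s)\geq\rk\bigl(\Cat^s_{q_n^s}\bigr),
\]
reducing the proposition to the claim that $\Cat^s_{q_n^s}$ has maximum possible rank. Since $\Cat^s_{q_n^s}$ is a linear map between $\bbC[y_1,\dots,y_n]_s$ and $\bbC[x_1,\dots,x_n]_{s}$, both of which have dimension $\binom{s+n-1}{s}$, maximum rank is equivalent to injectivity, and its rank once injectivity is established is exactly $\binom{s+n-1}{s}$.

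Next I would identify the kernel of $\Cat^s_{q_n^s}$ with the degree-$s$ component of the apolar ideal $(q_n^s)^\perp$, which is immediate from the definition of the catalecticant map and apolarity. At this point I would invoke the identification already recorded in the excerpt, namely $(q_n^s)^\perp=(\calH_n^{s+1})$, from \cite{Fla22}*{Theorem 3.8}. Because this ideal is generated by elements of degree $s+1$, all of its components in degree $k\leq s$ vanish; in particular the degree-$s$ component is zero, so $\Cat^s_{q_n^s}$ is injective and thus an isomorphism of $\binom{s+n-1}{s}$-dimensional spaces.

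Chaining the two steps completes the proof. The only nontrivial ingredient is the apolar ideal identification $(q_n^s)^\perp=(\calH_n^{s+1})$, which is the main potential obstacle; however, it is cited as an already-established result, so no further work is needed in this proposition. Should one wish to avoid this black box, an alternative would be a direct full-rank computation for the middle catalecticant of $q_n^s$, in the spirit of Reznick's argument via \cite{Rez92}*{Theorems 3.7 and 3.16} or of Gesmundo--Landsberg \cite{GL19}*{Theorem 2.2}, but the apolarity route is the most economical and is the one I would present.
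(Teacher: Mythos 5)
Your proposal is correct and follows essentially the same route as the paper: the text preceding the proposition derives full rank of $\Cat_{q_n^s}^s$ from the apolar-ideal identification $(q_n^s)^\perp=(\calH_n^{s+1})$ of \cite{Fla22}*{Theorem 3.8} (noting the ideal is generated in degree $s+1$, so its degree-$s$ component vanishes), and combines this with inequality \eqref{rel_lower_bound}. Your side remarks about the alternative full-rank proofs via Reznick and Gesmundo--Landsberg also mirror the paper's discussion.
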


B.~Reznick provides in \cite{Rez92}*{formulas (8.35)--(8.36)} a decomposition of $q_n^2$ for $3\leq n\leq 7$, based on a family of integration quadrature formulas, which has been introduced by A.~H.~Stroud in \cite{Str67a}.
By verifying the formulas, we can prove that the same formula is also valid for $n\geq 9$. Hence, this gives an upper bound on the rank with the only exception of $n=8$, for which we can only get an upper bound of $45$. These decompositions can be found in \cite{Fla24}*{Theorems 5.9 and 5.15}.

\begin{teo}
\label{teo_minimal_decomposition_q_n^2}
Let $n\in\bbN$ be such that $n\geq 3$ and $n\neq 8$ and let $g\in\bbC$ such that $g^4=8-n$. Then, the form $q_n^2$ can be decomposed as
\begin{equation}
\label{rel_minimal_decomposition_q_n^2}
3a_5^4q_n^2=a_1\biggl(\sum_{j=1}^n x_{j}\biggr)^4+\sum_{k=1}^n\Biggl(a_2\biggl(\sum_{j=1}^nx_{j}\biggl)+a_3x_{k}\Biggr)^4+\sum_{j_1\neq j_2}\Biggl(a_4\biggl(\sum_{j=1}^nx_{j}\biggr)+a_5(x_{j_1}+x_{j_2})\Biggr)^4,
\end{equation}
where
\begin{gather*}
a_1=8(g^4-1)\bigl(g^2\pm 2\sqrt{2}\bigr)^4,\quad a_2=2g^2\pm 2\sqrt{2},\quad a_3=\mp 2\sqrt{2}g^4-8g^2,\\ a_4=2g,\qquad a_5=\bigl(\mp 2\sqrt{2}g^3-8g\bigr).
\end{gather*}
Moreover, the form $q_8^2$ can be decomposed as
\begin{equation}
\label{rel_minimal_decomposition_q_8^2}
q_8^2=-\frac{3}{512}\biggl(\sum_{j=1}^8x_{j}\biggr)^4-\frac{8}{9}\sum_{j=1}^8x_j^4+\frac{8}{9}\sum_{k=1}^8\Biggl(\frac{3}{16}{\biggl(\sum_{j=1}^8x_{j}\biggl)}+{x_{k}}\Biggr)^4+\frac{1}{3}\sum_{j_1\neq j_2}\Biggl(-\frac{3}{8}\biggl(\sum_{j=1}^8x_{j}\biggr)+(x_{j_1}+x_{j_2})\Biggr)^4.
\end{equation}
\end{teo}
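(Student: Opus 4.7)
The plan is to verify formula \eqref{rel_minimal_decomposition_q_n^2} by direct expansion and coefficient comparison, with a separate verification of \eqref{rel_minimal_decomposition_q_8^2} for the exceptional case. Since both sides are $\mfS_n$-invariant homogeneous polynomials of degree $4$, the space of $\mfS_n$-orbits of monomials is spanned by the five representatives $x_i^4$, $x_i^3x_j$, $x_i^2x_j^2$, $x_i^2x_jx_k$, and $x_ix_jx_kx_l$ (pairwise distinct indices). The target coefficients, read off from $3a_5^4 q_n^2=3a_5^4\bigl(\sum_i x_i^2\bigr)^2$, are $3a_5^4$, $0$, $6a_5^4$, $0$, $0$, so the identity reduces to five scalar equations.

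First I would introduce $S=\sum_{j=1}^n x_j$ and expand each family on the right-hand side by the multinomial theorem. The term $a_1 S^4$ is immediate. For the single-sum family, $\sum_{k=1}^n(a_2 S+a_3 x_k)^4$ equals $\sum_{i=0}^{4}\binom{4}{i}a_2^{4-i}a_3^i\,S^{4-i}\sum_{k=1}^n x_k^i$, which when further expanded contributes to each of the five orbit types according to how the free index $k$ meets the exponent-bearing indices. The analogous expansion is carried out for $\sum_{j_1\ne j_2}(a_4 S+a_5(x_{j_1}+x_{j_2}))^4$, splitting cases by how many of $j_1,j_2$ coincide with the fixed exponent-bearing indices; each case contributes a polynomial in $n$ multiplying a monomial in $a_4$ and $a_5$.

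Collecting these contributions produces a system of five polynomial equations in $a_1,\dots,a_5$ and $n$. Substituting the stated values, which depend on an auxiliary scalar $g$ with $g^4=8-n$, the equations should reduce to identities holding modulo the relation $g^4-(8-n)$. The $\pm$ appearing in the definitions of $a_1,a_3,a_5$ records that, after substitution, the system admits two families of solutions related by $g\mapsto -g$, each producing a valid decomposition; the cancellations rely on the odd powers of $g$ combining correctly across the three families. The specific constant $8$ in $g^4=8-n$ is exactly what makes all five equations simultaneously solvable, reflecting the origin of the formula in Stroud's cubature of precision $4$ on the sphere $S^{n-1}$.

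The main obstacle is twofold. Combinatorially, the bookkeeping for the middle and last families is delicate: the coefficient of each orbit representative receives several contributions indexed by the possible collisions between summation indices and exponent-bearing indices, producing polynomials in $n$ of degree up to $4$ that must cancel against the polynomial expressions in $g$ dictated by $g^4=8-n$. Algebraically, the case $n=8$ degenerates because $g\to 0$ forces $a_3,a_4,a_5$ to vanish and the general formula collapses; this is why a separate decomposition \eqref{rel_minimal_decomposition_q_8^2} is required, which I would verify by the same orbit-by-orbit expansion, now with fully explicit rational coefficients so that each of the five identities reduces to elementary arithmetic.
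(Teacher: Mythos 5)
Your approach is the natural one and is in spirit what the paper does: it does not prove this theorem in-document, but delegates to \cite{Fla24}*{Theorems 5.9 and 5.15} (and ultimately to Stroud's degree-$4$ quadrature, which you correctly identify as the source), and the remark preceding the theorem makes clear that the intended verification is exactly direct coefficient comparison. Your outline is sound: the five $\mathfrak{S}_n$-orbit types of degree-$4$ monomials are the right invariant-theoretic frame, the target coefficients $3a_5^4,\,0,\,6a_5^4,\,0,\,0$ (using $\sum_{i<j}x_i^2x_j^2$ as the orbit representative for the $(2,2)$ type) are read off correctly from $3a_5^4\bigl(\sum_i x_i^2\bigr)^2$, and the reason $n=8$ must be treated separately is correctly diagnosed ($g^4=0$ forces $a_3=a_4=a_5=0$, so both sides of \eqref{rel_minimal_decomposition_q_n^2} vanish identically and the formula degenerates).

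The gap is that the proposal remains a plan rather than a proof. The entire content of this theorem is a verifiable polynomial identity, so its proof \emph{is} the execution of the bookkeeping you defer. You set up the five scalar equations in $a_1,\dots,a_5,n$ and then assert that, after substitution of the stated values, they ``should reduce to identities modulo $g^4-(8-n)$'' --- but this is precisely what must be demonstrated, and it is neither automatic nor obvious (the identity holds only for these specific $a_i$, and the $\pm$ sign choices must be shown to be consistent across all three families). To close the gap you would need to write out the five coefficient equations explicitly, substitute, and exhibit the cancellations using $g^4=8-n$; the same applies, with explicit rationals, to the $n=8$ formula \eqref{rel_minimal_decomposition_q_8^2}. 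It would also be worth verifying $a_5\neq 0$ for $n\ge 3$, $n\neq 8$ (which holds: $a_5=0$ would force $g=0$ or $g^2=\mp 2\sqrt{2}$, i.e.\ $n=8$ or $n=0$), since otherwise the left-hand normalization $3a_5^4$ would make the identity vacuous.
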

The family of decompositions \eqref{rel_minimal_decomposition_q_n^2} is just one of the possible closed formulas that provide us a family of decompositions for the power of a quadratic form.  We now generalize this pattern by defining a new set of generators.

\begin{defn}
For every $s\in\bbN$, the \textit{set of $k$-partitions of $s$} is the set
\[
\calP_k(s)=\Set{(m_1,\dots,m_k)\in\bbN^k|\sum_{i=1}^km_i=s,\, m_1\geq\cdots\geq m_k>0}.
\]
The \textit{set of partitions of $s$} is the set
\[
\calP(s)=\bigcup_{j=1}^s\calP_j(s).
\]
\end{defn}
Given a natural number $n\in\bbN$, a problem of great relevance in both classical and recent literature is determining the number of partitions of $n$.
\begin{defn}
The \textit{partition function} is defined as the map
\[
\begin{tikzcd}[row sep=0pt,column sep=1pc]
 \p\colon \bbN\arrow{r} & \bbN\hphantom{.} \\
  {\hphantom{\p\colon{}}} s \arrow[mapsto]{r} & \abs*{\calP(s)}.
\end{tikzcd}
\]
For every $k\in\bbN$, the \textit{$k$-partition function} is defined as the map
\[
\begin{tikzcd}[row sep=0pt,column sep=1pc]
 \p_k\colon \bbN\arrow{r} & \bbN\hphantom{.} \\
  {\hphantom{\p_k\colon{}}} s \arrow[mapsto]{r} & \abs*{\calP_k(s)}.
\end{tikzcd}
\]
\end{defn}
We immediately observe that the equality
\[
\p(s)=\sum_{k=1}^s\p_k(s)
\]
holds for every $s\in\bbN$.
The partition function $\p(s)$ has always been of great importance in several branches of mathematics. Although no closed formulas are known to describe it, there are many asymptotic approximations and also many recurrence formulas providing the exact value of $\p(s)$ for every $s\in\bbN$. The first estimate of $\p(s)$ was provided in 1918 by G.~H.~Hardy and S.~Ramanujan in \cite{HR18}. In particular, they proved that, as $s\to +\infty$, it can be approximated as
\[
\p(s)\sim\frac{1}{4s\sqrt{3}}\rme^{\uppi\sqrt{\frac{2s}{3}}},
\]
which means that it grows as an exponential function of the square root of $s$. To get more details on the partition function $\p(s)$, we also refer to \cite{Rad37}, in which H.~Rademacher provides a convergent series for the partition function, improving the results contained in \cite{HR18}. For what concerns the $k$-partition function $\p_k(s)$, instead, there are some results regarding approximations and lower and upper bounds. 
In \cite{Mer14}*{p.~299}, M.~Merca provides an upper bound for the value $\p_k(s)$, from which we obtain the inequality
\begin{equation}
\label{rel_upper_bound_p_k(s)}
{\p}_{k}(s)\leq\frac{1}{2}\binom{s-1}{k-1}+\frac{1}{2}\leq \binom{s-1}{k-1}.
\end{equation}
Another tighter upper bound is given by A.~T.~Oruç in \cite{Oru16}*{Corollary 1}, corresponding to the formula
\[
\p_k(s)\leq\frac{5.44}{s-k}\rme^{\uppi\sqrt{\frac{2(s-k)}{3}}}.
\]

Denoting by $\mfS_n$ the permutation group of $n$ elements,
we introduce a family of polynomials which are invariant under the action of ${\mfS}_n$. For any partition $\bfm\in\calP_k(s)$, we define the polynomial
\begin{equation}
\label{rel_def_polynomials_p}
M_{\bfm}=\frac{1}{\abs{(\mfS_k)_{\bfm}}}\sum_{1\leq t_1<\cdots< t_k\leq n}\sum_{\sigma\in\mfS_k}x_{t_{\sigma(1)}}^{m_{1}}\cdots x_{t_{\sigma(k)}}^{m_{k}}=\frac{1}{\abs{(\mfS_n)_{\bfm}}}\sum_{\sigma\in\mfS_n}x_{\sigma(1)}^{m_1}\cdots x_{\sigma(n)}^{m_n},
\end{equation}
where $(\mfS_k)_{\bfm}$ denotes the stabilizer of the $k$-uple $\bfm$ under the action of $\mfS_k$. These polynomials are known as \textit{monomial symmetric polynomials}, which are defined in \cite{FH91}*{formula (A.2)}, and form a basis of the space of homogeneous symmetric polynomials. For further details about symmetric functions, we refer to \cite{FH91}*{Appendix A}.

In formula \eqref{rel_def_polynomials_p}, we consider every $k$-partition to be viewed as an $n$-tuple with the last entries equal to $0$.
Clearly, the polynomial $M_{\bfm}$ is invariant under the action of $\mfS_n$.
\begin{exam}
Let us consider the partition $(2,1,1)$ of $s=4$ in the case where $n=4$. Then, the polynomial $M_{(2,1,1)}\in\bbK[x_1,x_2,x_3,x_4]$, is given by
\begin{equation*}
M_{(2,1,1)}=\sum_{i=1}^4\sum_{\substack{1\leq j_1<j_2\leq 4\\ j_1,j_2\neq i}}x_i^2x_{j_1}x_{j_2}.
\end{equation*}
If instead we take the partition $(2,2,1,1)$ of $s=6$ in the case where $n=5$, we get
\[
M_{(2,2,1,1)}=\sum_{\substack{1\leq i_1<i_2\leq 5}}\sum_{\substack{1\leq j_1<j_2\leq 5\\ j_1,j_2\neq i_1,i_2}}x_{i_1}^2x_{i_2}^2x_{j_1}x_{j_2}.
\]
\end{exam}
The $s$-th power of the quadratic form $q_n$ can be developed as
\[
q_n^s=\sum_{i_1+\cdots+i_n=s}\binom{s}{i_1,\dots,i_n}x_1^{2i_1}\cdots x_{n}^{2i_n},
\]
for every $s\in\bbN$.
Gathering all multi-indices of the same permutation class, we can write
\begin{align}
\label{rel_formula_q_n^s_linear_comb_p}
q_n^s&\nonumber=\sum_{\bfm\in\calP(s)}\sum_{\sigma\in\mfS_n}\binom{s}{m_1,\dots,m_n}x_{\sigma(1)}^{2m_1}\cdots x_{\sigma(n)}^{2m_n}\\
&\nonumber=\sum_{k=1}^n\sum_{\bfm\in\calP_k(s)}\sum_{t_1\leq\cdots\leq t_k}\sum_{\sigma\in\mfS_k}\binom{s}{m_1,\dots,m_k}x_{t_{\sigma(1)}}^{2m_1}\cdots x_{t_{\sigma(k)}}^{2m_k}\\
&=\sum_{k=1}^n\sum_{\bfm\in\calP_k(s)}\binom{s}{m_1,\dots,m_k}M_{2\bfm}.
\end{align}
Now we need to remark a quite trivial fact.
\begin{lem}
\label{lem_odd_exponents}
For every $n$-uple $(a_1,\dots,a_n)\in\bbC^n$, the polynomial 
\[
\sum(a_1x_1\pm\cdots\pm a_nx_n)^{2k} \] 
contains only monomials with all exponents being even.
\end{lem}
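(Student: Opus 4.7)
The plan is to expand each summand by the multinomial theorem, swap the order of summation, and evaluate the resulting sign sum for each monomial separately. For a fixed tuple of signs $(j_2,\dots,j_n)\in\{0,1\}^{n-1}$, expanding gives
\[
\bigl(a_1x_1+(-1)^{j_2}a_2x_2+\cdots+(-1)^{j_n}a_nx_n\bigr)^{2k}=\sum_{\bfalpha}\binom{2k}{\alpha_1,\dots,\alpha_n}a_1^{\alpha_1}\cdots a_n^{\alpha_n}(-1)^{j_2\alpha_2+\cdots+j_n\alpha_n}x_1^{\alpha_1}\cdots x_n^{\alpha_n},
\]
where the sum runs over $\bfalpha\in\bbN^n$ with $\alpha_1+\cdots+\alpha_n=2k$. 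Summing over all $2^{n-1}$ sign choices and interchanging sums, the coefficient of the monomial $x_1^{\alpha_1}\cdots x_n^{\alpha_n}$ is
\[
\binom{2k}{\alpha_1,\dots,\alpha_n}a_1^{\alpha_1}\cdots a_n^{\alpha_n}\prod_{i=2}^{n}\bigl(1+(-1)^{\alpha_i}\bigr).
\]

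Now I would observe that for each index $i\in\{2,\dots,n\}$ the factor $1+(-1)^{\alpha_i}$ equals $2$ if $\alpha_i$ is even and $0$ if $\alpha_i$ is odd. Consequently the coefficient vanishes as soon as any one of $\alpha_2,\dots,\alpha_n$ is odd, so every surviving monomial has $\alpha_2,\dots,\alpha_n$ all even. Finally, since $\alpha_1=2k-(\alpha_2+\cdots+\alpha_n)$ is a difference of even integers, $\alpha_1$ itself must be even. This shows that every monomial appearing with nonzero coefficient has all exponents even, which is exactly the claim. The argument is elementary once the sign sum is factored; the only mildly delicate point is making sure the parity of the suppressed first exponent $\alpha_1$ is handled, and this is forced automatically by the evenness of the total degree $2k$.
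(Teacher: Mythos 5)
Your proof is correct, but it takes a different route from the paper's. The paper's argument is purely by symmetry: it observes that the sum $\sum(a_1x_1\pm\cdots\pm a_nx_n)^{2k}$ is invariant under each substitution $x_i\mapsto -x_i$ (for $i\geq 2$ this merely permutes the summands; for $i=1$ one pulls out the factor $(-1)^{2k}=1$ and observes that reversing all the signs permutes the summands again), so it is an even function in each coordinate separately, and an even polynomial in each variable can contain only even exponents. You instead expand by the multinomial theorem and factor the sign sum into $\prod_{i=2}^{n}\bigl(1+(-1)^{\alpha_i}\bigr)$, reading off the vanishing directly from the coefficient formula. Your version amounts to unwinding the paper's symmetry argument into an explicit computation; it is slightly longer but entirely self-contained and makes the ``even function $\Rightarrow$ even exponents'' step explicit rather than tacit. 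Either proof is complete, and you correctly handle the suppressed first exponent $\alpha_1$ by parity of the total degree.
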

\begin{proof}
The statement follows immediately from the fact that the polynomial
\[
\sum(a_1x_1\pm\cdots\pm a_nx_n)^{2k}
\]
represents an even function in each coordinate. That is, it remains unchanged by substituting the variable $x_i$ by the opposite $-x_i$ for every $i=1,\dots,n$.
\end{proof}
Considering equation \eqref{rel_decom_Rezn_q_n^2}, which corresponds to a family of decompositions of $q_n^2$,
we can show how to obtain it using \autoref{lem_odd_exponents}. It is sufficient to solve a linear system in two variables, where the unknowns are the coefficients of each monomial. These are the coefficients of the polynomials 
\[
M_{(2,2)}=\frac{1}{2!(n-2)!}\sum_{\sigma\in\mfS_n}x_{\sigma(1)}^2x_{\sigma(2)}^2=\sum_{i_1<i_2}x_{i_1}^2x_{i_2}^2,\qquad M_{(4)}=\frac{1}{(n-1)!}\sum_{\sigma\in\mfS_{n}}x_{\sigma(1)}^4=\sum_{i_1}x_{i_1}^4.
\]
Thus, by comparing the coefficients in both sides of the equation
\[
q_n^2=c_1\sum_{i_1<i_2}(x_{i_1}\pm x_{i_2})^4+c_2\sum_i^nx_i^4,
\]
i.e., writing
\[
2M_{(2,2)}=2\binom{4}{2}c_1M_{(2,2)},\qquad
M_{(4)}=2(n-1)c_1M_{(4)}+c_2M_{(4)},
\]
we obtain that the complete matrix of the system is a $2\times 3$ matrix equal to
\begin{equation}
\label{rel_matrix_q_n^2}
\begin{pNiceMatrix}[margin,vlines=3]
\Block[fill=gray!10,borders={top,left,right,bottom}]{1-1}{}12&0&2\\
2(n-1)&\Block[fill=gray!10,borders={top,left,right,bottom}]{1-1}{}1&1
\end{pNiceMatrix},
\end{equation}
which provides coefficients
$c_1=\frac 16$ and $c_2=\frac{4-n}{3}$.

Decomposition \eqref{rel_decomp_BHMT18_q_n^3} is a natural generalization of decomposition \eqref{rel_decom_Rezn_q_n^2} and we give a proof here, repeating the procedure for higher values of the exponent.
\begin{prop}[\cite{BHMT18}*{Section 4.5}]
\label{prop_BHMT18_q_n^3}
For every $n\in\bbN$, the form $q_n^3$ can be decomposed as
\begin{equation*}
60q_n^3=\sum_{i_1<i_2<i_3}(x_{i_1}\pm x_{i_2}\pm x_{i_3})^6+2(5-n)\sum_{i_1<i_2}(x_{i_1}\pm x_{i_2})^6+2(n^2-9n+38)\sum_{i_1}x_{i_1}^6,
\end{equation*}
which is a decomposition of size 
\[
4\binom{n}{3}+2\binom{n}{2}+n=\frac{2}{3}n^3-n^2+\frac{4}{3}n
\]
if $n\geq 3$ and $n\neq 5$ and of size $45$ if $n=5$.
\end{prop}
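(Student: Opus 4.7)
The plan is to follow the same template used in the excerpt for \eqref{rel_decom_Rezn_q_n^2}, now adapted to the partitions of $s=3$. First, by \autoref{lem_odd_exponents} each of the three sums on the right-hand side contains only monomials whose exponents are all even, and each is $\mfS_n$-invariant. Hence each one is a $\bbC$-linear combination of the three monomial symmetric polynomials $M_{(6)}$, $M_{(4,2)}$, $M_{(2,2,2)}$, which by the general expansion \eqref{rel_formula_q_n^s_linear_comb_p} with $s=3$ also give
\[
q_n^3=M_{(6)}+3M_{(4,2)}+6M_{(2,2,2)}.
\]
So matching coefficients in the basis $\{M_{(6)},M_{(4,2)},M_{(2,2,2)}\}$ reduces the identity to a $3\times 3$ linear system.

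Next I would carry out the three expansions explicitly. For the single-variable sum one trivially has $\sum_{i_1}x_{i_1}^6=M_{(6)}$. For the two-variable sum, using $\sum_{\epsilon=\pm 1}\epsilon^k=2$ if $k$ even and $0$ otherwise, together with $\binom{6}{2}=\binom{6}{4}=15$ and the obvious counting of how often each monomial $x_{i}^6$ or $x_i^4x_j^2$ appears, one finds
\[
\sum_{i_1<i_2}(x_{i_1}\pm x_{i_2})^6=2(n-1)M_{(6)}+30M_{(4,2)}.
\]
For the three-variable sum, the same parity argument eliminates the sign summation at the cost of a factor $4$, the multinomials $\binom{6}{6,0,0}=1$, $\binom{6}{4,2,0}=15$, $\binom{6}{2,2,2}=90$ control the three partition classes, and each monomial is accompanied by the number of triples $i_1<i_2<i_3$ in which it appears (namely $\binom{n-1}{2}$, $n-2$, and $1$ respectively), yielding
\[
\sum_{i_1<i_2<i_3}(x_{i_1}\pm x_{i_2}\pm x_{i_3})^6=2(n-1)(n-2)M_{(6)}+60(n-2)M_{(4,2)}+360M_{(2,2,2)}.
\]

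With this in hand, the decomposition amounts to solving the triangular system in $c_1,c_2,c_3$ obtained by equating coefficients of $M_{(2,2,2)}$, $M_{(4,2)}$, $M_{(6)}$ in
\[
q_n^3=c_1\sum_{i_1<i_2<i_3}(x_{i_1}\pm x_{i_2}\pm x_{i_3})^6+c_2\sum_{i_1<i_2}(x_{i_1}\pm x_{i_2})^6+c_3\sum_{i_1}x_{i_1}^6,
\]
which is triangular. From $360c_1=6$ one gets $c_1=1/60$; substituting into the $M_{(4,2)}$-equation $60(n-2)c_1+30c_2=3$ yields $c_2=(5-n)/30$; and the $M_{(6)}$-equation then gives $c_3=(n^2-9n+38)/30$. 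Multiplying through by $60$ recovers exactly the stated identity.

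Finally, for the size: the three sums contribute $4\binom{n}{3}$, $2\binom{n}{2}$, and $n$ linear forms respectively, which are pairwise distinct for $n\geq 3$. When $n\neq 5$ the coefficient $2(5-n)$ is nonzero (and $n^2-9n+38$ is never zero, as its discriminant $81-152$ is negative), so the decomposition has size $4\binom{n}{3}+2\binom{n}{2}+n=\tfrac{2}{3}n^3-n^2+\tfrac{4}{3}n$; for $n=5$ the middle block vanishes and only $4\binom{5}{3}+5=45$ summands remain. The only genuinely delicate point is the bookkeeping in the expansion of the three-variable sum—tracking how many ordered triples of exponents correspond to each partition class, and how many unordered triples of indices contribute to each monomial class—so I would write that count out carefully; everything else is a $3\times 3$ triangular solve.
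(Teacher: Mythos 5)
Your proof is correct and takes essentially the same route as the paper: both reduce the identity to a $3\times 3$ triangular linear system in the coefficients of $M_{(2,2,2)}$, $M_{(4,2)}$, $M_{(6)}$ after observing (via the parity lemma and $\mfS_n$-symmetry) that each summand lives in the span of those three monomial symmetric polynomials. Your explicit expansions of the two- and three-variable sums are exactly what the paper's matrix
$\begin{pmatrix}360 & 0 & 0 \\ 60(n-2) & 30 & 0 \\ 4\binom{n-1}{2} & 2(n-1) & 1\end{pmatrix}$
encodes, and your back-substitution recovers $c_1=1/60$, $c_2=(5-n)/30$, $c_3=(n^2-9n+38)/30$ correctly; your remarks on the size count (including the vanishing of the middle block at $n=5$ and the nonvanishing of $n^2-9n+38$) are also accurate.
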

\begin{proof}
We can obtain the required equality by solving the linear system associated with the equation
\[
q_n^3=c_1\sum_{i_1<i_2<i_3}(x_{i_1}\pm x_{i_2}\pm x_{i_3})^6+c_2\sum_{i_1<i_2}(x_{i_1}\pm x_{i_2})^6+c_3\sum_{i=1}^nx_i^6.
\]
By \autoref{lem_odd_exponents} and the symmetry of each summand, we can suppose the development of the decomposition to be a linear combination of the polynomials  
\begin{equation*}
M_{(2,2,2)}=\sum_{i_1<i_2<i_3}x_{i_1}^2x_{i_2}^2x_{i_3}^2,\quad M_{(4,2)}=\sum_{i_1<i_2}\bigl(x_{i_1}^4x_{i_2}^2+x_{i_1}^2x_{i_2}^4\bigr),\quad
M_{(6)}=\sum_{i_1}x_{i_1}^6.
\end{equation*}
By comparing their coefficients, we get a linear system whose associated matrix is
\[
\begin{pNiceMatrix}[vlines=4,cell-space-limits=2pt]
2^2\binom{6}{2,2,2}&0&0&\binom{3}{1,1,1}\\
2^2(n-2)\binom{6}{4,2}&2\binom{6}{4,2}&0&\binom{3}{2,1}\\
2^2\binom{n-1}{2}&2(n-1)&1&1
\end{pNiceMatrix},
\]
which we can transform as
\begin{equation}
\label{rel_matrix_q_n^3}
\begin{pNiceMatrix}[margin,vlines=4,cell-space-limits=2pt]
\Block[fill=gray!10,borders={top,left,right,bottom}]{1-1}{}4&0&0&\frac{1}{15}\\
4(n-2)&\Block[fill=gray!10,borders={top,left,right,bottom}]{1-1}{}2&0&\frac{1}{5}\\
4\binom{n-1}{2}&2(n-1)&\Block[fill=gray!10,borders={top,left,right,bottom}]{1-1}{}1&1
\end{pNiceMatrix}
\end{equation}
By solving this linear system, we get the required decomposition.
\end{proof}
In this case, the size of decompositions given by formula \eqref{rel_decomp_BHMT18_q_n^3} grows as $n^3$ for $n\to +\infty$. Therefore, we can find a number $n_3\in\bbN$ such that 
\[
\rk(q^3_n)<\frac{1}{n}\binom{n+5}{6}
\] 
for every $n>n_3$.
In particular, we have 
\[
\frac{2}{3}n^3-n^2+\frac{4}{3}n<\frac{1}{n}\binom{n+5}{6}
\]
if and only if $n>11$. Thus we get $n_3=11$, which implies the following proposition.
\begin{prop}
\label{prop_subgeneric_rank_q_n^3}
Let $n\in\bbN$ be such that $n>11$. Then
\[
\rk(q_n^3)<\frac{1}{n}\binom{n+5}{6}.
\]
In particular, for $n>11$, $\rk(q_n^3)$ is subgeneric.
\end{prop}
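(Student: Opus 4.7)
The plan is to compare the upper bound for $\rk(q_n^3)$ obtained in Proposition \ref{prop_BHMT18_q_n^3} directly against the expression for the generic rank, showing that the difference is positive precisely when $n > 11$. Since the threshold $n>11$ easily excludes the exceptional case $n=5$ in Proposition \ref{prop_BHMT18_q_n^3}, I may freely use the upper bound
\[
\rk(q_n^3)\leq\tfrac{2}{3}n^3-n^2+\tfrac{4}{3}n
\]
for all $n$ under consideration.

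First, I would rewrite the right-hand side of the target inequality as a polynomial rather than a binomial coefficient, namely
\[
\frac{1}{n}\binom{n+5}{6}=\frac{(n+1)(n+2)(n+3)(n+4)(n+5)}{720}.
\]
Expanding the product in the numerator gives
\[
(n+1)(n+2)(n+3)(n+4)(n+5)=n^5+15n^4+85n^3+225n^2+274n+120,
\]
which is a straightforward computation using the elementary symmetric polynomials in $1,2,3,4,5$.

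Next, I would clear denominators by multiplying the inequality
\[
\tfrac{2}{3}n^3-n^2+\tfrac{4}{3}n < \tfrac{1}{720}(n+1)(n+2)(n+3)(n+4)(n+5)
\]
by $720$, reducing the proof to showing that the quintic
\[
P(n):=n^5+15n^4-395n^3+945n^2-686n+120
\]
is strictly positive for every integer $n\geq 12$. Since the leading term $n^5$ eventually dominates, the real content is to identify the correct threshold; an elementary numerical check at $n=12$ gives $P(12)=5280>0$, while $P(11)=-38160<0$, so the inequality first becomes strict at $n=12$.

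The main (though modest) obstacle is showing that $P(n)>0$ for \emph{all} integers $n\geq 12$, not merely asymptotically. To handle this cleanly I would verify monotonicity of $P$ on $[12,+\infty)$ by bounding the derivative: one checks that $P'(n)=5n^4+60n^3-1185n^2+1890n-686$ is positive for $n\geq 12$ (e.g.\ by observing that $5n^4\geq 5\cdot 12\cdot n^3=60n^3$, so $P'(n)\geq 120n^3-1185n^2+1890n-686$, which is clearly positive for $n\geq 12$). Combined with $P(12)>0$, this yields $P(n)>0$ for all $n\geq 12$, completing the proof; the second assertion then follows immediately from the Alexander--Hirschowitz formula for the generic rank in degree $6$.
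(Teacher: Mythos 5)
Your proposal is correct and follows exactly the same route as the paper: compare the explicit decomposition size $\tfrac{2}{3}n^3 - n^2 + \tfrac{4}{3}n$ from \autoref{prop_BHMT18_q_n^3} against the generic rank $\tfrac{1}{n}\binom{n+5}{6}$. The paper merely asserts that the resulting polynomial inequality holds if and only if $n>11$, whereas you supply the verification (expansion, the quintic $P(n)$, the sign check at $n=11,12$, and the monotonicity argument via $P'$); all of your computations check out.
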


\section{Further examples of closed formulas}
\label{sec_further examples}
The most natural idea to generalize this pattern for higher exponents would be to consider in the summation exactly the same kind of linear forms with additional variables, that is, linear forms of the type 
\[
\sum(x_{i_1}\pm\cdots\pm x_{i_s})^{2s}.
\]
Unfortunately, this condition is not sufficient to obtain powers of quadratic forms in general.
We can see this in the following example concerning the case of $s=4$. In this case, we have five polynomials, namely, $M_{(2,2,2,2)}$, $M_{(4,2,2)}$, $M_{(4,4)}$, $M_{(6,2)}$, and $M_{(8)}$. Assuming a hypothetical decomposition as
\begin{equation*}
q_n^4=c_1\sum_{i_1<i_2<i_3<i_4}(x_{i_1}\pm x_{i_2}\pm x_{i_3}\pm x_{i_4})^8+c_2\sum_{i_1<i_2<i_3}(x_{i_1}\pm x_{i_2}\pm x_{i_3})^8+c_3\sum_{i_1<i_2}(x_{i_1}\pm x_{i_2})^8+c_4\sum_{i_1}x_{i_1}^8,
\end{equation*}
we have four unknown coefficients $c_1,c_2,c_3,c_4\in\bbK$.
However, the matrix associated with the system obtained by comparing the coefficients of each monomial is
\[
\begin{pNiceMatrix}[vlines=5,cell-space-limits=2pt]
2^3\binom{8}{2,2,2,2}&0&0&0&\binom{4}{1,1,1,1}\\
2^3(n-3)\binom{8}{4,2,2}&2^2\binom{8}{4,2,2}&0&0&\binom{4}{2,1,1}\\
2^3\binom{n-2}{2}\binom{8}{4,4}&2^2(n-2)\binom{8}{4,4}&2\binom{8}{4,4}&0&\binom{4}{2,2}\\
2^3\binom{n-2}{2}\binom{8}{6,2}&2^2(n-2)\binom{8}{6,2}&2\binom{8}{6,2}&0&\binom{4}{3,1}\\
2^3\binom{n-1}{3}&2^2\binom{n-1}{2}&2(n-1)&1&1
\end{pNiceMatrix},
\]
whose rank is the same as the matrix
\[
\begin{pNiceMatrix}[vlines=5,cell-space-limits=2pt]
8&0&0&0&\frac{1}{105}\\
8(n-3)&4&0&0&\frac{1}{35}\\
4(n-2)(n-3)&4(n-2)&2&0&\frac{3}{35}\\
4(n-2)(n-3)&4(n-2)&2&0&\frac{1}{7}\\
4(n-1)(n-2)(n-3)&6(n-1)(n-2)&6(n-1)&3&3
\end{pNiceMatrix}.
\]
Comparing the third and the fourth rows of the matrix, we can see that the system has no solution. This shows that the condition of considering the same kind of linear forms with additional variables is not sufficient to obtain powers of quadratic forms in general for the case of $s=4$.

So, to obtain suitable decompositions for the case of exponent $4$, we need another set of points while maintaining the symmetry among all of the variables.
\begin{prop}
\label{prop_decomp_q_n^4}
For every $n\in\bbN$, the form $q_n^4$ can be decomposed as
\begin{align}
\label{rel_decompos_q_n^4}
\nonumber840q_n^4&=\sum_{i_1<i_2<i_3<i_4}(x_{i_1}\pm x_{i_2}\pm x_{i_3}\pm x_{i_4})^8+2(6-n)\sum_{i_1<i_2<i_3}(x_{i_1}\pm x_{i_2}\pm x_{i_3})^8\\[1ex]
&\nonumber\hphantom{{}={}}+\frac{2}{3}(3n^2-33n+76)\sum_{i_1<i_2}(x_{i_1}\pm x_{i_2})^8+\frac{2}{3}\sum_{i_1<i_2}\bigl[(2x_{i_1}\pm x_{i_2})^8+(x_{i_1}\pm 2x_{i_2})^8\bigr]\\[1ex]
&\hphantom{{}={}}-\frac{4}{3}\bigl(n^3-15n^2+317n-918\bigr)\sum_{i_1}x_{i_1}^8,
\end{align}
which is a decomposition of size
\[
8\binom{n}{4}+4\binom{n}{3}+6\binom{n}{2}+n=\frac{1}{3}(n^4-4n^3+14n^2-8n),
\]
for $n\geq 4$ and $n\neq 6$, and of size $216$ for $n=6$.
\end{prop}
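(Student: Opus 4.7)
The strategy is the one followed for \autoref{prop_BHMT18_q_n^3}: expand both sides in the basis of monomial symmetric polynomials of degree $8$ in $n$ variables, and solve the resulting $5\times 6$ linear system. The motivation for introducing the extra family $\sum_{i_1<i_2}\bigl[(2x_{i_1}\pm x_{i_2})^8+(x_{i_1}\pm 2x_{i_2})^8\bigr]$ was already supplied just above the statement: without it, the $M_{(4,4)}$- and $M_{(6,2)}$-rows of the system become proportional and the system is inconsistent. The new family must therefore be used to produce a fifth column whose ratio of $M_{(4,4)}$- to $M_{(6,2)}$-entries differs from the one supplied by $\sum(x_{i_1}\pm x_{i_2})^8$.

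First, by \autoref{lem_odd_exponents} every summand on the right-hand side of \eqref{rel_decompos_q_n^4} is a linear combination of the monomial symmetric polynomials $M_{(2,2,2,2)}$, $M_{(4,2,2)}$, $M_{(4,4)}$, $M_{(6,2)}$, $M_{(8)}$, and by \eqref{rel_formula_q_n^s_linear_comb_p} the left-hand side expands as
\[
q_n^4=24\,M_{(2,2,2,2)}+12\,M_{(4,2,2)}+6\,M_{(4,4)}+4\,M_{(6,2)}+M_{(8)}.
\]
I would then compute the contribution of each of the five right-hand families. The three families $\sum(x_{i_1}\pm x_{i_2}\pm x_{i_3}\pm x_{i_4})^8$, $\sum(x_{i_1}\pm x_{i_2}\pm x_{i_3})^8$ and $\sum(x_{i_1}\pm x_{i_2})^8$ are treated as in the proof of \autoref{prop_BHMT18_q_n^3}, using the elementary counting identity that the number of $\ell$-tuples $i_1<\cdots<i_{\ell}$ containing a fixed $k$-tuple equals $\binom{n-k}{\ell-k}$. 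The essential new computation is the binomial identity
\[
(2x+y)^8+(2x-y)^8+(x+2y)^8+(x-2y)^8=514(x^8+y^8)+3808(x^6y^2+x^2y^6)+4480\,x^4y^4,
\]
which is to be compared with $(x+y)^8+(x-y)^8=2(x^8+y^8)+56(x^6y^2+x^2y^6)+140\,x^4y^4$. The mismatch of the ratios $4480{:}3808$ and $140{:}56$ is exactly what restores the rank of the coefficient matrix.

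Arranging these contributions into the augmented $5\times 6$ matrix (analogous to \eqref{rel_matrix_q_n^3}), I would then verify by direct substitution that the prescribed coefficients
\[
c_1=1,\quad c_2=2(6-n),\quad c_3=\tfrac{2}{3}(3n^2-33n+76),\quad c_4=\tfrac{2}{3},\quad c_5=-\tfrac{4}{3}(n^3-15n^2+317n-918)
\]
reproduce $840\,q_n^4$. In the ordering $M_{(2,2,2,2)}, M_{(4,2,2)}, M_{(4,4)}, M_{(6,2)}, M_{(8)}$ the matrix is almost triangular: the first two equations pin down $c_1$ and $c_2$; the next two, thanks to the extra column supplied by the new family, jointly determine $c_3$ and $c_4$; only the $M_{(8)}$-equation involves all five parameters, and after clearing denominators it reduces to a cubic polynomial identity in $n$. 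This last check is the step I expect to be the main obstacle, since it is the deepest of the five and requires assembling the contributions $8\binom{n-1}{3}$, $4\binom{n-1}{2}$, $2(n-1)$, $514(n-1)$ and the constant term from $R_5$ into a single polynomial in $n$.

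Finally, the size count follows from
\[
8\binom{n}{4}+4\binom{n}{3}+2\binom{n}{2}+4\binom{n}{2}+n=\tfrac{1}{3}(n^4-4n^3+14n^2-8n),
\]
where the two $\binom{n}{2}$ summands record the sizes $2\binom{n}{2}$ of $\sum(x_{i_1}\pm x_{i_2})^8$ and $4\binom{n}{2}$ of the new family. The special case $n=6$ is explained by the fact that $c_2=2(6-n)$ then vanishes, so the $4\binom{6}{3}=80$ summands of the second family drop out and the total reduces from $296$ to $216$.
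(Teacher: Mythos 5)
Your proposal is correct and follows essentially the same route as the paper's own proof: expand both sides in the monomial symmetric polynomials $M_{(2,2,2,2)},M_{(4,2,2)},M_{(4,4)},M_{(6,2)},M_{(8)}$, observe that the $M_{(4,4)}$ and $M_{(6,2)}$ rows become proportional without the new family, and solve the resulting nearly-triangular linear system. Your binomial identities for $(2x\pm y)^8+(x\pm 2y)^8$ and $(x\pm y)^8$, the coefficient expansion of $q_n^4$, the size count $8\binom{n}{4}+4\binom{n}{3}+(2+4)\binom{n}{2}+n$, and the explanation of the $n=6$ degeneration via $c_2=2(6-n)=0$ all match the paper's computation.
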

\begin{proof}
As above, we consider the generic decomposition
\begin{align}
\label{rel_generic_linear_form_q_n^4}
q_n^4&\nonumber=c_1\sum_{i_1<i_2<i_3<i_4}(x_{i_1}\pm x_{i_2}\pm x_{i_3}\pm x_{i_4})^8+c_2\sum_{i_1<i_2<i_3}(x_{i_1}\pm x_{i_2}\pm x_{i_3})^8+c_3\sum_{i_1<i_2}(x_{i_1}\pm x_{i_2})^8\\[1ex]
&\hphantom{{}={}}+c_4\sum_{i_1<i_2}\bigl[(2x_{i_1}\pm x_{i_2})^8+(x_{i_1}\pm 2x_{i_2})^8\bigr]+c_5\sum_{i_1}x_{i_1}^8,
\end{align}
and the five different polynomials
\begin{gather*}
M_{(2,2,2,2)}=\sum_{i_1<i_2<i_3<i_4}x_{i_1}^2x_{i_2}^2x_{i_3}^2x_{i_4}^2,\quad M_{(4,2,2)}=\sum_{i_1<i_2<i_3}\bigl(x_{i_1}^4x_{i_2}^2x_{i_3}^2+x_{i_1}^2x_{i_2}^4x_{i_3}^2+x_{i_1}^2x_{i_2}^2x_{i_3}^4\bigr),\\
M_{(4,4)}=\sum_{i_1<i_2}x_{i_1}^4x_{i_2}^4,\quad
M_{(6,2)}=\sum_{i_1<i_2}\bigl(x_{i_1}^6x_{i_2}^2+x_{i_1}^2x_{i_2}^6\bigr),\quad M_{(8)}=\sum_{i_1}x_{i_1}^8.
\end{gather*}
By comparing the coefficients of these last elements with those of formula \eqref{rel_formula_q_n^s_linear_comb_p}, we obtain the matrix
\[
\begin{pNiceMatrix}[vlines=6,cell-space-limits=2pt]
2^3\binom{8}{2,2,2,2}&0&0&0&0&\binom{4}{1,1,1,1}\\
2^3(n-3)\binom{8}{4,2,2}&2^2\binom{8}{4,2,2}&0&0&0&\binom{4}{2,1,1}\\
2^3\binom{n-2}{2}\binom{8}{4,4}&2^2(n-2)\binom{8}{4,4}&2\binom{8}{4,4}&2(2^4+2^4)\binom{8}{4,4}&0&\binom{4}{2,2}\\
2^3\binom{n-2}{2}\binom{8}{6,2}&2^2(n-2)\binom{8}{6,2}&2\binom{8}{6,2}&2(2^6+2^2)\binom{8}{6,2}&0&\binom{4}{3,1}\\
2^3\binom{n-1}{3}&2^2\binom{n-1}{2}&2(n-1)&2(n-1)(2^8+1)&1&1
\end{pNiceMatrix},
\]
which, for the system, is equivalent to
\begin{equation}
\label{rel_matrix_q_n^4}
\begin{pNiceMatrix}[margin,vlines=6,cell-space-limits=2pt]
\Block[fill=gray!10,borders={top,left,right,bottom}]{1-1}{}8&0&0&0&0&\frac{1}{105}\\
8(n-3)&\Block[fill=gray!10,borders={top,left,right,bottom}]{1-1}{}4&0&0&0&\frac{1}{35}\\
4(n-2)(n-3)&4(n-2)&\Block[fill=gray!10,borders={top,left,right,bottom}]{2-2}{}2&64&0&\frac{3}{35}\\
4(n-2)(n-3)&4(n-2)&2&136&0&\frac{1}{7}\\
4(n-1)(n-2)(n-3)&6(n-1)(n-2)&6(n-1)&1542(n-1)&\Block[fill=gray!10,borders={top,left,right,bottom}]{1-1}{}3&3
\end{pNiceMatrix}.
\end{equation}
Since the highlighted blocks in matrix \eqref{rel_matrix_q_n^4} are invertible matrices, the linear system admits a unique solution, corresponding to the coefficients of decomposition \eqref{rel_decompos_q_n^4}.
\end{proof}
The crucial fact in determining decomposition \eqref{cor_log_limit} is that the entries of the blocks in matrix \eqref{rel_matrix_q_n^4} do not depend on $n$. This allows to obtain an explicit decomposition for an arbitrary number of variables. However, to generalize this pattern, it is necessary that all blocks have a non-zero determinant.

As observed in \autoref{prop_subgeneric_rank_q_n^3} for decomposition \eqref{rel_decomp_BHMT18_q_n^3}, one could verify that
\[
\frac{1}{3}(n^4-4n^3+14n^2-8n)<\frac{1}{n}\binom{n+7}{8}
\]
if and only if $n>10$. Consequently, we have the following proposition.
\begin{prop}
\label{prop_subgeneric_rank_q_n^4}
Let $n\in\bbN$ be such that $n>10$. Then
\[
\rk(q_n^4)<\frac{1}{n}\binom{n+7}{8}.
\]
In particular, for $n>10$, $\rk(q_n^4)$ is subgeneric.
\end{prop}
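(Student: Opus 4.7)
The plan is to mimic exactly the argument used for Proposition \ref{prop_subgeneric_rank_q_n^3}: apply the explicit decomposition of $q_n^4$ constructed in Proposition \ref{prop_decomp_q_n^4} to obtain a concrete upper bound on $\rk(q_n^4)$, then compare this upper bound with the generic rank of forms of degree $8$ in $n$ variables given by the Alexander--Hirschowitz theorem. Since the hypothesis $n>10$ excludes the anomalous case $n=6$ of Proposition \ref{prop_decomp_q_n^4}, I can unambiguously apply the closed formula
\[
\rk(q_n^4)\leq\tfrac{1}{3}\bigl(n^4-4n^3+14n^2-8n\bigr).
\]

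Having recorded this bound, the statement reduces to the purely elementary inequality
\[
\tfrac{1}{3}\bigl(n^4-4n^3+14n^2-8n\bigr)<\tfrac{1}{n}\binom{n+7}{8}
\]
for every $n>10$. Expanding the right-hand side as $\frac{(n+1)(n+2)\cdots(n+7)}{8!}$ and clearing denominators gives a polynomial inequality whose right-hand side has degree $7$ with positive leading coefficient $1/40320$, while the left-hand side has degree $4$. Thus the right-hand side eventually dominates, so the inequality necessarily holds for all sufficiently large $n$, and only the sharpness of the threshold $n_4=10$ remains to be verified.

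To pin down this threshold, I would simply tabulate both sides at small values. A direct substitution at $n=10$ yields $\frac{1}{3}(10000-4000+1400-80)=2440$ on the left and $\frac{1}{10}\binom{17}{8}=\frac{24310}{10}=2431$ on the right, so the inequality fails at $n=10$; at $n=11$ one finds $3641<3978$, so it first holds at $n=11$. For all $n\geq 11$ the inequality then persists, as can be confirmed either by checking that the difference of the two sides, viewed as a polynomial in $n$, has no further real roots beyond $n=11$, or equivalently by noting that its derivative is positive on $[11,+\infty)$. Once the inequality is established, the conclusion follows because $\rk(q_n^4)$ is bounded above by the left-hand side, hence strictly less than $\frac{1}{n}\binom{n+7}{8}\leq\lceil\frac{1}{n}\binom{n+7}{8}\rceil$, which is the generic rank of a degree-$8$ form in $n$ variables by Alexander--Hirschowitz.

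No step is genuinely difficult: the only ``obstacle'' is the bookkeeping required to identify the precise crossover value $n=10$. Everything else is an immediate consequence of Proposition \ref{prop_decomp_q_n^4} together with the comparison against the Alexander--Hirschowitz generic rank.
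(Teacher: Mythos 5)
Your proposal is correct and follows essentially the same route as the paper: the paper's argument, given in the sentence immediately preceding the proposition, consists of invoking the size bound from \autoref{prop_decomp_q_n^4} and observing that $\tfrac{1}{3}(n^4-4n^3+14n^2-8n)<\tfrac{1}{n}\binom{n+7}{8}$ precisely when $n>10$. You have merely made the comparison explicit by computing both sides at $n=10$ and $n=11$ and noting the degree gap forces the inequality to persist.
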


Before examining another example for the case of exponent $s=5$ we observe that decomposition \eqref{rel_decompos_q_n^4} is not optimal in general. By considering a different kind of linear forms for the fourth coefficient $c_4$, we can determine another decomposition of smaller size, involving roots of unity. Indeed, since $8$-th powers of linear forms are invariant under the action of $4$-roots of unity, we have, in particular,
\[
(x_{i_1}\pm \rmi x_{i_2})^8=(\rmi x_{i_1}\pm x_{i_2})^8.
\]
Thus, replacing the fourth summand of equation \eqref{rel_generic_linear_form_q_n^4} with this last linear form, we can repeat the same procedure and obtain the matrix
\begin{equation}
\label{rel_matrix_complex_q_n^4}
\begin{pNiceMatrix}[margin,vlines=6,cell-space-limits=2pt]
\Block[fill=gray!10,borders={top,left,right,bottom}]{1-1}{}8&0&0&0&0&\frac{1}{105}\\
8(n-3)&\Block[fill=gray!10,borders={top,left,right,bottom}]{1-1}{}4&0&0&0&\frac{1}{35}\\
4(n-2)(n-3)&4(n-2)&\Block[fill=gray!10,borders={top,left,right,bottom}]{2-2}{}2&2&0&\frac{3}{35}\\
4(n-2)(n-3)&4(n-2)&2&-2&0&\frac{1}{7}\\
4(n-1)(n-2)(n-3)&6(n-1)(n-2)&6(n-1)&6(n-1)&\Block[fill=gray!10,borders={top,left,right,bottom}]{1-1}{}3&3
\end{pNiceMatrix}.
\end{equation}
We therefore obtain decomposition of size
\[
8\binom{n}{4}+4\binom{n}{3}+4\binom{n}{2}+n=\frac{1}{3}n^4-\frac 43n^3+\frac{11}{3}n^2-\frac{5}{3}n,
\]
which is a slight improvement over decomposition \eqref{rel_decompos_q_n^4}.

To generalize this behavior, we provide another example for the case of exponent $s=5$. The proof is structured exactly like \autoref{prop_decomp_q_n^4}, but in this case, we have two blocks of size $2$ along the diagonal of the matrix.
\begin{prop}
\label{prop_decomp_q_n^5}
For every $n\in\bbN$, the form $q_n^5$ can be decomposed as 
\begin{align}
\label{rel_decompos_q_n^5}
15120q_n^5&\nonumber=\sum_{i_1<i_2<i_3<i_4<i_5}(x_{i_1}\pm x_{i_2}\pm x_{i_3}\pm x_{i_4}\pm x_{i_5})^{10}-2(n-7)\sum_{i_1<i_2<i_3<i_4}(x_{i_1}\pm x_{i_2}\pm x_{i_3}\pm x_{i_4})^{10}\\[1ex]
&\nonumber\hphantom{{}={}}+2(n^2-13n+36)\sum_{i_1<i_2<i_3}(x_{i_1}\pm x_{i_2}\pm x_{i_3})^{10}+\frac{2}{3}\sum_{i_1<i_2<i_3}\bigl[(2x_{i_1}\pm x_{i_2}\pm x_{i_3})^{10}\\[1ex]
&\nonumber\hphantom{{}={}}+(x_{i_1}\pm 2x_{i_2}\pm x_{i_3})^{10}+(x_{i_1}\pm x_{i_2}\pm 2x_{i_3})^{10}\bigr]+\frac{4}{3}(n^3-18n^2+90n-226)\sum_{i_1<i_2}(x_{i_1}\pm x_{i_2})^{10}\\[1ex]
&\nonumber\hphantom{{}={}}-\frac{4}{3}(n-4)\sum_{i_1<i_2}\bigl[(2x_{i_1}\pm x_{i_2})^{10}+(x_{i_1}\pm 2x_{i_2})^{10}\bigr]\\[1ex]
&\hphantom{{}={}}+\frac{2}{3}(n^4-22n^3+2195n^2-15086n+35592)\sum_{i_1}x_{i_1}^{10},
\end{align}
which is a decomposition of size 
\[
16\binom{n}{5}+8\binom{n}{4}+16\binom{n}{3}+6\binom{n}{2}+n=\frac{2}{15}n^5-n^4+\frac{16}{3}n^3-8n^2+\frac{68}{15}n
\]
for $n\geq 5$ and $n\neq 7$, and of size $1029$ for $n=7$.
\end{prop}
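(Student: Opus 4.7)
The plan is to follow the strategy used in \autoref{prop_decomp_q_n^4}, now adapted to the exponent $s=5$. Since $\calP(5)$ has exactly seven elements, namely $(1,1,1,1,1)$, $(2,1,1,1)$, $(2,2,1)$, $(3,1,1)$, $(3,2)$, $(4,1)$, and $(5)$, formula \eqref{rel_formula_q_n^s_linear_comb_p} expresses $q_n^5$ as a linear combination of the seven monomial symmetric polynomials $M_{(2,2,2,2,2)}$, $M_{(4,2,2,2)}$, $M_{(4,4,2)}$, $M_{(6,2,2)}$, $M_{(6,4)}$, $M_{(8,2)}$, and $M_{(10)}$. I would therefore introduce seven unknown coefficients $c_1,\dots,c_7$ attached to the seven $\mfS_n$-invariant generators $T_1,\dots,T_7$ appearing on the right-hand side of \eqref{rel_decompos_q_n^5}.

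By \autoref{lem_odd_exponents} and the evident $\mfS_n$-invariance of each $T_j$, the expansion of $c_1T_1+\cdots+c_7T_7$ is again supported on those seven monomial symmetric polynomials, so matching coefficients produces a linear system of seven equations in seven unknowns. The entries come from a routine multinomial computation analogous to the one producing matrices \eqref{rel_matrix_q_n^3} and \eqref{rel_matrix_q_n^4}. Ordering the partitions from finest to coarsest, the resulting $7\times 8$ augmented matrix should be block lower-triangular: the $\pm 1$-type generator $\sum(x_{i_1}\pm\cdots\pm x_{i_k})^{10}$ feeds into $M_{2\bfm}$ only when $\bfm$ has at most $k$ parts, and each ``doubled'' generator shares the support of its $\pm 1$ counterpart but enters with distinct numerical weights depending on which variable carries the coefficient~$2$. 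Consequently, I expect the diagonal to split into five blocks: three $1\times 1$ blocks at the rows indexed by $(1,1,1,1,1)$, $(2,1,1,1)$, and $(5)$, together with two $2\times 2$ blocks --- one coupling $M_{(4,4,2)}$ with $M_{(6,2,2)}$ through the three-variable doubled forms, and one coupling $M_{(6,4)}$ with $M_{(8,2)}$ through the two-variable doubled forms.

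The main obstacle will be verifying the invertibility of these two $2\times 2$ diagonal blocks, which, as noted after \autoref{prop_decomp_q_n^4}, is a statement independent of $n$. Factoring out the common multinomial coefficients $\binom{10}{4,4,2}$, $\binom{10}{6,2,2}$ (and respectively $\binom{10}{6,4}$, $\binom{10}{8,2}$) from the two rows of each block, the question reduces to checking that two concrete $2\times 2$ integer determinants with entries of the form $1$ and $2^{2a}+2^{2b}+2^{2c}$ (respectively $2^{2a}+2^{2b}$) are nonzero, which is a direct numerical computation. Once invertibility is established, back-substitution yields a unique closed-form solution, which I would then confirm agrees with the coefficients displayed in \eqref{rel_decompos_q_n^5}.

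Finally, the size of the decomposition is obtained by summing $2^{k-1}$ over each group of $k$-variable forms and multiplying by the number of supports: $T_1$ contributes $16\binom{n}{5}$, $T_2$ contributes $8\binom{n}{4}$, the pair $T_3,T_4$ contributes $(4+12)\binom{n}{3}=16\binom{n}{3}$, the pair $T_5,T_6$ contributes $(2+4)\binom{n}{2}=6\binom{n}{2}$, and $T_7$ contributes $n$. Expanding gives the claimed polynomial in $n$. The exceptional case $n=7$ arises from the vanishing of the factor $-2(n-7)$ multiplying $T_2$, which removes the $8\binom{n}{4}$ summand and reduces the total to $16\binom{7}{5}+16\binom{7}{3}+6\binom{7}{2}+7=1029$.
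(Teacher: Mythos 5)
Your proposal is correct and follows essentially the same route as the paper: set up seven unknown coefficients against the seven monomial symmetric polynomials $M_{2\bfm}$ for $\bfm\in\calP(5)$, exploit \autoref{lem_odd_exponents} and the $\mfS_n$-invariance to reduce to a $7\times 8$ block lower-triangular system (matching the paper's matrix \eqref{rel_matrix_q_n^5}, with exactly the $1,1,2,2,1$ block sizes you describe), check the two $2\times 2$ determinants numerically, and back-substitute. Your size count and the explanation of the $n=7$ exception via the vanishing of $-2(n-7)$ are likewise the paper's computation.
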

\begin{proof}
In this case, the polynomials involved for the decomposition are seven. These are
\begin{gather*}
M_{(2,2,2,2,2)}=\sum_{i_1<i_2<i_3<i_4<i_5}x_{i_1}^2x_{i_2}^2x_{i_3}^2x_{i_4}^2x_{i_5}^2,\\
M_{(4,2,2,2)}=\sum_{i_1<i_2<i_3<i_4}(x_{i_1}^4x_{i_2}^2x_{i_3}^2x_{i_4}^2+x_{i_1}^2x_{i_2}^4x_{i_3}^2x_{i_4}^2+x_{i_1}^2x_{i_2}^2x_{i_3}^4x_{i_4}^2+x_{i_1}^2x_{i_2}^2x_{i_3}^2x_{i_4}^4),\\
M_{(4,4,2)}=\sum_{i_1<i_2<i_3}(x_{i_1}^4x_{i_2}^4x_{i_3}^2+x_{i_1}^4x_{i_2}^2x_{i_3}^4+x_{i_1}^2x_{i_2}^4x_{i_3}^4),\quad
M_{(6,2,2)}=\sum_{i_1<i_2<i_3}(x_{i_1}^6x_{i_2}^2x_{i_3}^2+x_{i_1}^2x_{i_2}^6x_{i_3}^2+x_{i_1}^2x_{i_2}^2x_{i_3}^6),\\ M_{(6,4)}=\sum_{i_1<i_2}(x_{i_1}^6x_{i_2}^4+x_{i_1}^4x_{i_2}^6),\quad
M_{(8,2)}=\sum_{i_1<i_2}(x_{i_1}^8x_{i_2}^2+x_{i_1}^2x_{i_2}^8),\quad M_{(10)}=\sum_{i_1}x_{i_1}^{10}.
\end{gather*}
We can determine seven values $c_1,\dots,c_7$ such that
\begin{align*}
q_n^5&=c_1\sum_{i_1<i_2<i_3<i_4<i_5}(x_{i_1}\pm x_{i_2}\pm x_{i_3}\pm x_{i_4}\pm x_{i_5})^{10}+c_2\sum_{i_1<i_2<i_3<i_4}(x_{i_1}\pm x_{i_2}\pm x_{i_3}\pm x_{i_4})^{10}\\[1ex]
&\nonumber\hphantom{{}={}}+c_3\sum_{i_1<i_2<i_3}(x_{i_1}\pm x_{i_2}\pm x_{i_3})^{10}+c_4\sum_{i_1<i_2<i_3}\bigl((2x_{i_1}\pm x_{i_2}\pm x_{i_3})^{10}+(x_{i_1}\pm 2x_{i_2}\pm x_{i_3})^{10}\\[1ex]
&\nonumber\hphantom{{}={}}+(x_{i_1}\pm x_{i_2}\pm 2x_{i_3})^{10}\bigr)+c_5\sum_{i_1<i_2}(x_{i_1}\pm x_{i_2})^{10}+c_6\sum_{i_1<i_2}\bigl((2x_{i_1}\pm x_{i_2})^{10}+(x_{i_1}\pm 2x_{i_2})^{10}\bigr)+c_7\sum_{i_1}x_{i_1}^{10}.
\end{align*}
By comparing the coefficients, we obtain a linear system associated to the matrix
\begin{equation}
\label{rel_matrix_q_n^5}
\begin{pNiceMatrix}[margin,vlines=8]
\Block[fill=gray!10,borders={top,left,right,bottom}]{1-1}{}16&0&0&0&0&0&0&\binom{10}{2,2,2,2,2}^{-1}\binom{5}{1,1,1,1,1}\\
16(n-4)&\Block[fill=gray!10,borders={top,left,right,bottom}]{1-1}{}8&0&0&0&0&0&\binom{10}{4,2,2,2}^{-1}\binom{5}{2,1,1,1}\\
16\binom{n-3}{2}&8(n-3)&\Block[fill=gray!10,borders={top,left,right,bottom}]{2-2}{}4&144&0&0&0&\binom{10}{4,4,2}^{-1}\binom{5}{2,2,1}\\
16\binom{n-3}{2}&8(n-3)&4&288&0&0&0&\binom{10}{6,2,2}^{-1}\binom{5}{3,1,1}\\
16\binom{n-2}{3}&8\binom{n-2}{2}&4(n-2)&324(n-2)&\Block[fill=gray!10,borders={top,left,right,bottom}]{2-2}{}2&160&0&\binom{10}{6,4}^{-1}\binom{5}{3,2}\\
16\binom{n-2}{3}&8\binom{n-2}{2}&4(n-2)&1044(n-2)&2&520&0&\binom{10}{8,2}^{-1}\binom{5}{4,1}\\
16\binom{n-1}{4}&8\binom{n-1}{3}&4\binom{n-1}{2}&2052\binom{n-1}{2}&2(n-1)&2050(n-1)&\Block[fill=gray!10,borders={top,left,right,bottom}]{1-1}{}1&1
\end{pNiceMatrix}
\end{equation}
and solving it, we get the required decomposition.
\end{proof}
Again, we can compute the minimum number of variables that guarantees that the size of the decomposition is subgeneric.
\begin{prop}
\label{prop_subgeneric_rank_q_n^5}
Let $n\in\bbN$ be such that $n>8$. Then
\[
\rk(q_n^5)<\frac{1}{n}\binom{n+9}{10}.
\]
In particular, for $n>8$, $\rk(q_n^5)$ is subgeneric.
\end{prop}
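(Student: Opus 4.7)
The plan is to follow the same template used in the proofs of \autoref{prop_subgeneric_rank_q_n^3} and \autoref{prop_subgeneric_rank_q_n^4}. By \autoref{prop_decomp_q_n^5}, for every $n\geq 5$ with $n\neq 7$ we have
\[
\rk(q_n^5) \leq \tfrac{2}{15}n^5 - n^4 + \tfrac{16}{3}n^3 - 8n^2 + \tfrac{68}{15}n.
\]
By the theorem of J.~Alexander and A.~Hirschowitz, the generic rank of a form of degree $10$ in $n$ variables is $\tfrac{1}{n}\binom{n+9}{10}$. So the statement of the proposition reduces to checking that this explicit upper bound on $\rk(q_n^5)$ is strictly smaller than the generic rank for every $n>8$, which is entirely a problem about a polynomial inequality in $n$.

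The right-hand side equals $\tfrac{1}{10!}(n+1)(n+2)\cdots(n+9)$, a polynomial in $n$ of degree $9$ with positive leading coefficient $1/10!$, whereas the decomposition size has degree $5$. Consequently the difference of the two sides is a degree-$9$ polynomial with positive leading coefficient and is certainly positive for all sufficiently large $n$. It remains only to pin down the precise threshold, which should turn out to be $n=8$.

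Direct evaluation settles this boundary: at $n=8$ the size of the decomposition is $2528$, while $\tfrac{1}{8}\binom{17}{10}=2431$, so the inequality fails; at $n=9$ the size is $4593$, while $\tfrac{1}{9}\binom{18}{10}=4862$, so it holds. To finish, one must verify that the inequality is not violated at any isolated larger value of $n$ before the degree-$9$ growth of the right-hand side clearly dominates. This is the only mildly delicate point, and it is handled either by a short numerical sweep over a small range of $n$ or by a monotonicity argument on the difference polynomial, neither of which presents a real obstacle. Thus the inequality holds precisely for $n>8$, and the proposition follows.
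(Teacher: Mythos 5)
Your proposal is correct and follows the same route the paper takes (implicitly) for \autoref{prop_subgeneric_rank_q_n^3}, \autoref{prop_subgeneric_rank_q_n^4}, and \autoref{prop_subgeneric_rank_q_n^5}: invoke \autoref{prop_decomp_q_n^5} to bound $\rk(q_n^5)$ by the polynomial $\tfrac{2}{15}n^5-n^4+\tfrac{16}{3}n^3-8n^2+\tfrac{68}{15}n$ and then verify that this degree-$5$ polynomial lies below the degree-$9$ quantity $\tfrac{1}{n}\binom{n+9}{10}$ exactly when $n>8$; your boundary values ($2528$ vs.\ $2431$ at $n=8$, and $4593$ vs.\ $4862$ at $n=9$) are correct. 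Two small cosmetic points: the Alexander--Hirschowitz generic rank is actually the ceiling $\bigl\lceil\tfrac{1}{n}\binom{n+9}{10}\bigr\rceil$, so the proposition's strict inequality is (harmlessly) a touch stronger than subgenericity; and the "short numerical sweep or monotonicity argument" you defer is genuinely easy but should be carried out (e.g., by showing the difference polynomial has positive derivative once $n\geq 9$), since the proof as written only checks $n=8,9$.
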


\section{Asymptotic growth and upper bounds for Waring rank}
\label{sec_Asymptotic_growing}
To obtain a pattern similar to what we have seen in matrices \eqref{rel_matrix_q_n^4}, \eqref{rel_matrix_complex_q_n^4}, and \eqref{rel_matrix_q_n^5}, we need to construct suitable decompositions that provide us with a sufficiently large number of equations. This will ensure the existence of square matrices along the diagonal with non-zero determinants, i.e., the highlighted blocks we have seen above in matrices \eqref{rel_matrix_q_n^2}, \eqref{rel_matrix_q_n^3}, \eqref{rel_matrix_q_n^4}, \eqref{rel_matrix_complex_q_n^4}, and \eqref{rel_matrix_q_n^5}. To prove this, we start with a lemma for matrices of polynomials.
\begin{lem}
\label{lem_det_nonzero}
For every $n,s\in\bbN$, let $f_1,\dots,f_s\in\bbC[x_{1},\dots,x_{n}]$  be linearly independent polynomials and let 
\[
g\in\bbC[x_{ij}]_{i=1,\dots,s,\,j=1,\dots,n}
\]
be the polynomial defined as
\[
g(x_{11},\dots,x_{1n},\dots,x_{s1},\dots,x_{sn})=\det
\begin{pNiceMatrix}
f_1(x_{11},\dots,x_{1n})&\Cdots &f_1(x_{s1},\dots,x_{sn})\\
\Vdots&\Ddots&\Vdots\\
f_s(x_{11},\dots,x_{1n})&\Cdots &f_s(x_{s1},\dots,x_{sn})
\end{pNiceMatrix}.
\]
Then there exist $s$ points $\bfa_1,\dots,\bfa_s\in\bbC^n$ such that 
\[
g(\bfa_1,\dots,\bfa_s)\neq 0,
\]
namely, $g\not\equiv 0$.
\end{lem}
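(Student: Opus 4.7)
The plan is to prove this lemma by induction on $s$, the number of polynomials. This is a standard Vandermonde-type argument.

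For the base case $s=1$, the polynomial $g$ is simply $f_1(x_{11},\dots,x_{1n})$, and by linear independence $f_1\neq 0$, so any point $\bfa_1$ with $f_1(\bfa_1)\neq 0$ suffices.

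For the inductive step, suppose the statement holds for every family of $s-1$ linearly independent polynomials. Given the linearly independent $f_1,\dots,f_s$, the subfamily $f_1,\dots,f_{s-1}$ is still linearly independent, so by the inductive hypothesis there exist points $\bfa_1,\dots,\bfa_{s-1}\in\bbC^n$ such that the $(s-1)\times(s-1)$ minor
\[
\Delta=\det\bigl(f_i(\bfa_j)\bigr)_{i,j=1,\dots,s-1}
\]
is nonzero. Fix these points and view $g(\bfa_1,\dots,\bfa_{s-1},x_{s1},\dots,x_{sn})$ as a polynomial in the last batch of variables. Expanding the determinant along the last column, we obtain
\[
g(\bfa_1,\dots,\bfa_{s-1},x_{s1},\dots,x_{sn})=\sum_{i=1}^{s}(-1)^{i+s}\Delta_i\,f_i(x_{s1},\dots,x_{sn}),
\]
where $\Delta_i$ is the $(s-1)\times(s-1)$ minor obtained by deleting row $i$ and column $s$. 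In particular, $\Delta_s=\Delta\neq 0$.

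Since the coefficient of $f_s$ in this linear combination is $\Delta_s\neq 0$ and $f_1,\dots,f_s$ are linearly independent in $\bbC[x_1,\dots,x_n]$, the resulting polynomial in $(x_{s1},\dots,x_{sn})$ is not identically zero. Hence there exists a point $\bfa_s\in\bbC^n$ such that $g(\bfa_1,\dots,\bfa_s)\neq 0$, which completes the induction. The only slightly delicate point is the bookkeeping for the expansion along the last column and the observation that linear independence of the $f_i$ rules out accidental cancellation, but no further estimate is needed.
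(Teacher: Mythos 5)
Your proof is correct and follows essentially the same inductive scheme as the paper's: expand the determinant along the last column, use the inductive hypothesis to get a nonzero leading $(s-1)\times(s-1)$ minor, then invoke linear independence of $f_1,\dots,f_s$ to conclude the resulting linear combination is nonzero. Your version is in fact slightly more economical — you only need the single minor $\Delta_s$ to be nonzero, whereas the paper arranges (unnecessarily) for all $s$ cofactors along the last column to be simultaneously nonzero by intersecting the nonempty open sets $D(g_k)$; either way the key point is that at least one coefficient in the expansion is nonzero.
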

\begin{proof}
We prove the statement by induction on $s$. For $s=1$, the proof is trivial, since by linear independence we must have $f_1\not\equiv 0$. So, let us suppose that the statement is true for $s-1$ and consider the polynomial matrix
\[
A(x_{11},\dots,x_{1n},\dots,x_{s1},\dots,x_{sn})=
\begin{pNiceMatrix}
f_1(x_{11},\dots,x_{1n})&\Cdots &f_1(x_{s1},\dots,x_{sn})\\
\Vdots&\Ddots&\Vdots\\
f_s(x_{11},\dots,x_{1n})&\Cdots &f_s(x_{s1},\dots,x_{sn})
\end{pNiceMatrix}.
\]
Let us further consider the polynomials
\[
g_k\in\bbC[x_{ij}]_{i=1,\dots,s-1,\,j=1,\dots,n},
\]
defined as
\[
g_k=(-1)^{k+1}\det A_{kn}(x_{11},\dots,x_{1n},\dots,x_{(s-1)1},\dots,x_{(s-1)n})
\]
for every $k=1,\dots,s$, where $A_{kn}$ is the $(s-1)\times(s-1)$ matrix obtained by removing the $k$-th row and the $s$-th column from the matrix $A$. 
By the inductive hypothesis, we have that $g_k\not\equiv 0$, which means
\[
Z(g_k)\neq\bbC^{n(s-1)},
\]
and hence,
\[
D(g_k)=\bbC^{n(s-1)}\setminus Z(g_k)
\]
is a non-empty open set for every $k=1,\dots,s$. Therefore, we have
\[
\bigcap_{k=1}^{s}D(g_k)\neq\varnothing
\]
and we can select a point 
\[
\bfa=(\bfa_1,\cdots,\bfa_{s-1})\in\bbC^{n(s-1)}
\]
such that
$g_k(\bfa)\neq 0$
for every $k=1,\dots,s$. Now, considering the polynomial
\[
h(x_{s1},\dots,x_{sn})=g(\bfa_1,\dots,\bfa_{s-1},x_{s1},\dots,x_{sn})=\sum_{k=1}^sg_k(\bfa_1,\dots,\bfa_{s-1})f_k(x_{s1},\dots,x_{sn}),
\]
we have by the linear independence of the polynomials $f_1,\dots,f_s$, that
\[
h(x_{s1},\dots,x_{sn})\not\equiv 0.
\]
Therefore, we can select a point $\bfa_s\in\bbC^n$ such that
\[
h(\bfa_s)=g(\bfa_1,\dots,\bfa_s)\neq 0,
\]
proving the statement.
\end{proof}
The main blocks we want to consider have sizes equal to the different numbers of $k$-partitions of $s$, since they correspond to the number of monomials with exactly $k$ variables. In the following, we denote by $\bbN_0^k$ the set of $k$-tuple with strictly positive coordinates, for every $k\in\bbN$.

\begin{teo}
\label{teo_upper_bound_factorial}
For every $n,s\in\bbN$
\[
\rk(q_n^s)\leq\sum_{k=1}^s2^kk!\p_k(s)\binom{n}{k}.
\]
\end{teo}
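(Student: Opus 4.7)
The plan is to construct an $\mfS_n$-symmetric Waring decomposition of $q_n^s$ with one family of linear forms per partition of $s$, and then to show that the associated linear system in the unknown coefficients is solvable because its matrix is block triangular with invertible diagonal blocks.

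More precisely, for every partition $\bfm=(m_1,\dots,m_k)\in\calP_k(s)$ (with $k=1,\dots,s$), I would fix a coefficient vector $\bfa^{(\bfm)}=(a^{(\bfm)}_1,\dots,a^{(\bfm)}_k)\in\bbC^k$, to be chosen later, and include in the decomposition all linear forms obtained by permuting its entries, by assigning all $2^k$ sign choices, and by selecting all $\binom{n}{k}$ possible $k$-element subsets of $\{x_1,\dots,x_n\}$. This contributes at most $2^k k!\binom{n}{k}$ summands per partition, and exactly $\sum_{k=1}^s 2^k k!\,\p_k(s)\binom{n}{k}$ in total, which matches the bound in the statement. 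The generic ansatz reads
\[
q_n^s=\sum_{k=1}^s\sum_{\bfm\in\calP_k(s)} c_{\bfm}\sum_{1\leq i_1<\cdots<i_k\leq n}\sum_{\sigma\in\mfS_k}\sum\bigl(a^{(\bfm)}_{\sigma(1)}x_{i_1}\pm\cdots\pm a^{(\bfm)}_{\sigma(k)}x_{i_k}\bigr)^{2s},
\]
with unknowns $c_{\bfm}\in\bbC$; comparing it with the expansion \eqref{rel_formula_q_n^s_linear_comb_p} of $q_n^s$ in the monomial symmetric basis $\{M_{2\bfn}\}_{\bfn\in\calP(s)}$, and using \autoref{lem_odd_exponents} to discard odd monomials, produces a square $\p(s)\times\p(s)$ linear system in the $c_{\bfm}$.

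As in matrices \eqref{rel_matrix_q_n^2}, \eqref{rel_matrix_q_n^3}, \eqref{rel_matrix_q_n^4}, and \eqref{rel_matrix_q_n^5}, after ordering partitions by decreasing number of parts this matrix is block lower triangular, because a linear form involving only $k$ variables contributes to $M_{2\bfn}$ only when $\bfn$ has at most $k$ parts. The $k$-th diagonal block has size $\p_k(s)\times\p_k(s)$ and, after factoring out of the row indexed by $\bfn$ the nonzero constant $2^k\binom{2s}{2\bfn}\abs{(\mfS_k)_{\bfn}}$ coming from the multinomial expansion and from the sum over signs and over permutations, its $(\bfn,\bfm)$-entry is exactly the monomial symmetric polynomial associated to $\bfn$, evaluated at the point $\bigl((a^{(\bfm)}_1)^2,\dots,(a^{(\bfm)}_k)^2\bigr)$. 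Crucially, this entry does not depend on $n$, which is the very feature that made the examples of \autoref{sec_powers_of_quadrics} and \autoref{sec_further examples} extend uniformly in $n$.

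The main obstacle is then to guarantee that all $s$ diagonal blocks are simultaneously invertible, and this is precisely what \autoref{lem_det_nonzero} provides. For each $k$, the monomial symmetric polynomials $\{M_{\bfn}:\bfn\in\calP_k(s)\}$ are linearly independent in $\bbC[y_1,\dots,y_k]$, so the determinant of the matrix $\bigl[M_{\bfn}(\bfy^{(\bfm)})\bigr]_{\bfn,\bfm\in\calP_k(s)}$ is a nonzero polynomial in the coordinates of the points $\bfy^{(\bfm)}\in\bbC^k$, and therefore is nonzero on a dense open subset of $(\bbC^k)^{\p_k(s)}$. Picking the vectors $\bfa^{(\bfm)}$ inside this open locus for every $k=1,\dots,s$ makes all $s$ diagonal blocks invertible simultaneously, so the block-triangular linear system admits a unique solution in the $c_{\bfm}$. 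The resulting expression is a Waring decomposition of $q_n^s$ of size at most $\sum_{k=1}^s 2^k k!\,\p_k(s)\binom{n}{k}$, proving the bound.
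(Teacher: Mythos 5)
Your proposal reproduces the paper's proof of \autoref{teo_upper_bound_factorial} essentially verbatim: you build the same $\mfS_n$-symmetric ansatz from permutations, sign flips, and $k$-subsets of a coefficient vector for each partition $\bfm\in\calP_k(s)$, observe the same block lower-triangular structure of the $\p(s)\times\p(s)$ linear system in the $c_{\bfm}$, identify the entries of the $k$-th diagonal block (up to the same nonzero row factors) with the monomial symmetric polynomials evaluated at the squared coefficient vectors --- which are exactly the paper's polynomials $H_{k,\lambda}$ --- and invoke their linear independence together with \autoref{lem_det_nonzero} to pick points making every diagonal block simultaneously invertible. The reasoning, the key lemma, and the count of linear forms all coincide with the paper's argument.
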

\begin{proof}
As we have observed in the first cases above, we want to consider a sum of powers of linear form, while maintaining a strong symmetry between all the the variables.
For every $k=1,\dots,s$ and for every point 
$\bfa=(a_1,\dots,a_k)\in\bbC^k$, we introduce the polynomial $f_{k,\bfa}\in\bbK[x_1,\dots,x_n]$, defined as
\[
f_{k,\bfa}=\frac{1}{\abs{(\mfS_k)_{\bfa}}}\sum_{t_1<\cdots<t_k}\sum_{\sigma\in\mfS_k}(a_{\sigma(1)}x_{t_1}\pm\cdots\pm a_{\sigma(k)}x_{t_k})^{2s}.
\]
Here, we divide the sum by the quantity $\abs{(\mfS_k)_{\bfa}}$, as the action of the elements in $\mfS_k$ on $\bfa$ provides all the possible permutations of $\bfa$ without repetition. This structure was observed in formula \eqref{prop_decomp_q_n^4}, where we used the points 
\[
(1,1,1,1),\quad (1,1,1),\quad (1,1),\quad (2,1),\quad (1),
\]
and in formula \eqref{rel_decompos_q_n^5}, where we used the points
\[
(1,1,1,1,1),\quad (1,1,1,1),\quad (1,1,1),\quad (2,1,1),\quad (1,1),\quad (2,1),\quad (1).
\]
We develop the summations to separate monomials having the exponents in the same permutation class. We get
\begin{align*}
f_{k,\bfa}&=\frac{1}{\abs{(\mfS_k)_{\bfa})}}\sum_{t_1<\cdots<t_k}\sum_{\sigma\in\mfS_k}\sum_{\substack{\bfm\in\bbN^k\\[1pt] \abs{\bfm}=s}}2^{k-1}\binom{2s}{2m_1,\dots,2m_k}\prod_{i=1}^ka_{\sigma(i)}^{2m_i}x_{t_i}^{2m_i}\\
&=\frac{1}{\abs{(\mfS_k)_{\bfa}}}\sum_{t_1<\cdots<t_k}\sum_{\sigma\in\mfS_k}\sum_{\lambda=1}^k\sum_{\substack{\bfm\in\bbN_0^\lambda\\ \abs{\bfm}=s}}\sum_{1\leq s_1<\cdots<s_{\lambda}\leq k}2^{k-1}\binom{2s}{2m_1,\dots,2m_{\lambda}}\prod_{i=1}^\lambda a_{\sigma(s_i)}^{2m_i}x_{t_{s_i}}^{2m_i},
\end{align*}
where in the second equality, we have separated the monomials having a different number $\lambda$ of non-zero values appearing in the $k$-tuple $(m_1,\dots,m_k)$. We can also permute some summations, obtaining
\begin{align*}
f_{k,\bfa}&=\frac{1}{\abs{(\mfS_k)_{\bfa}}}\sum_{\lambda=1}^k\sum_{\substack{\bfm\in\bbN_0^\lambda\\ \abs{\bfm}=s}}\sum_{t_1<\cdots<t_k}\sum_{1\leq s_1<\cdots<s_{\lambda} \leq k}\sum_{\sigma\in\mfS_k}2^{k-1}\binom{2s}{2m_1,\dots,2m_{\lambda}}\prod_{i=1}^\lambda a_{\sigma(s_i)}^{2m_i}x_{t_{s_i}}^{2m_i}\\
&=\sum_{\lambda=1}^k\sum_{\substack{\bfm\in\bbN_0^\lambda\\ \abs{\bfm}=s}}\sum_{t_1<\cdots<t_k}\sum_{1\leq s_1<\cdots<s_{\lambda} \leq k}2^{k-1}\binom{2s}{2m_1,\dots,2m_{\lambda}}\biggl(\frac{1}{\abs{(\mfS_k)_{\bfa}}}\sum_{\sigma\in\mfS_k}\prod_{i=1}^\lambda a_{\sigma(s_i)}^{2m_i}\biggr)\prod_{j=1}^\lambda x_{t_{s_j}}^{2m_j}.
\end{align*}
The variables $x_{t_{s_1}},\cdots,x_{t_{s_{\lambda}}}$ appear among the variables $x_{t_1},\dots,x_{t_k}$ a number of times equal to the binomial coefficient
\[
\binom{n-\lambda}{k-\lambda}.
\]
Thus, we can remove the summation depending on $s_1,\dots,s_{\lambda}$ and write
\begin{align*}
f_{k,\bfa}&=\sum_{\lambda=1}^k\sum_{\substack{\bfm\in\bbN_0^\lambda\\ \abs{\bfm}=s}}\sum_{t_1<\cdots<t_{\lambda}}2^{k-1}\binom{n-\lambda}{k-\lambda}\binom{2s}{2m_1,\dots,2m_{\lambda}}\biggl(\frac{1}{\abs{(\mfS_k)_{\bfa}}}\sum_{\sigma\in\mfS_k}\prod_{i=1}^\lambda a_{\sigma(i)}^{2m_i}\biggr)\prod_{i=1}^\lambda x_{t_{i}}^{2m_i}\\
&=\sum_{\lambda=1}^k\sum_{\substack{\bfm\in\bbN_0^\lambda\\ \abs{\bfm}=s}}2^{k-1}\binom{n-\lambda}{k-\lambda}\binom{2s}{2m_1,\dots,2m_{\lambda}}\biggl(\frac{1}{\abs{(\mfS_k)_{\bfa}}}\sum_{\sigma\in\mfS_k}\prod_{i=1}^\lambda a_{\sigma(i)}^{2m_i}\biggr)\sum_{t_1<\cdots<t_{\lambda}}\prod_{i=1}^\lambda x_{t_{i}}^{2m_i}.
\end{align*}
Finally, we can gather the multi-indices representing the same partition of order $\lambda$ of the number $k$. To do this, it is sufficient to consider an element $\bfm=(m_1,\dots,m_{\lambda})$, such that 
$m_1\geq\cdots\geq m_{\lambda}>0$,
and its orbit under the action of the permutation group $\mfS_{\lambda}$.
We can decompose the polynomial $f_{k,\bfa}$ into more summands by distinguishing the order of each partition of $s$. Hence, we get
\begin{equation*}
f_{k,\bfa}
=\sum_{\lambda=1}^k\sum_{\bfm\in\calP_{\lambda}(s)}\sum_{\upgamma\in\mfS_{\lambda}}\frac{2^{k-1}}{\abs{(\mfS_{\lambda})_{\bfm}}}\binom{n-\lambda}{k-\lambda}\binom{2s}{2m_1,\dots,2m_{\lambda}}\biggl(\frac{1}{\abs{(\mfS_k)_{\bfa}}}\sum_{\sigma\in\mfS_k}\prod_{i=1}^\lambda a_{\sigma(i)}^{2m_{\gamma(i)}}\biggr)\sum_{t_1<\cdots<t_{\lambda}}\prod_{i=1}^\lambda x_{t_{i}}^{2m_{\gamma(i)}}.
\end{equation*}
Considering the summation
\[
\sum_{\sigma\in\mfS_k}\prod_{i=1}^\lambda a_{\sigma(i)}^{2m_{\gamma(i)}},
\]
since the permutation $\gamma$ just permutes the elements $1,\dots,\lambda$, it
varies among all the possible choices of the elements $a_1,\dots,a_k$ and, in particular, it is invariant under the action of $\mfS_{\lambda}$.
Therefore, we have
\[
\sum_{\sigma\in\mfS_k}\prod_{i=1}^\lambda a_{\sigma(i)}^{2m_{\gamma(i)}}=\sum_{\sigma\in\mfS_k}\prod_{i=1}^\lambda a_{\sigma\gamma^{-1}(i)}^{2m_{i}}=\sum_{\sigma\in\mfS_k}\prod_{i=1}^\lambda a_{\sigma(i)}^{2m_{i}}.
\]
and, by formula \eqref{rel_def_polynomials_p}, we obtain
\begin{align}
\label{rel_final_formula_f_k_a}
f_{k,\bfa}&\nonumber=\sum_{\lambda=1}^k\sum_{\bfm\in\calP_{\lambda}(s)}\frac{2^{k-1}}{\abs{(\mfS_{\lambda})_{\bfm}}}\binom{n-\lambda}{k-\lambda}\binom{2s}{2m_1,\dots,2m_{\lambda}}\biggl(\frac{1}{\abs{(\mfS_k)_{\bfa}}}\sum_{\sigma\in\mfS_k}\prod_{i=1}^\lambda a_{\sigma(i)}^{2m_i}\biggr)\sum_{\gamma\in\mfS_{\lambda}}\sum_{t_1<\cdots<t_{\lambda}}\prod_{i=1}^\lambda x_{t_{i}}^{2m_{\gamma(i)}}\\
&=\sum_{\lambda=1}^k\sum_{\bfm\in\calP_{\lambda}(s)}2^{k-1}\binom{n-\lambda}{k-\lambda}\binom{2s}{2m_1,\dots,2m_{\lambda}}\biggl(\frac{1}{\abs{(\mfS_k)_{\bfa}}}\sum_{\sigma\in\mfS_k}\prod_{i=1}^\lambda a_{\sigma(i)}^{2m_i}\biggr)M_{2\bfm}.
\end{align}
Thus, we have written the polynomial $f_{k,\bfa}$ as a linear combination of those given by the sum of monomials belonging to the same permutation class. Since these are the same ones appearing in the development of the form $q_n^s$ in formula \eqref{rel_formula_q_n^s_linear_comb_p}, we need to satisfy exactly $\p(s)$ conditions.

We establish these conditions by defining a polynomial $f_{k,\bfa_j}$ for every $j=1,\dots,\p_k(s)$ and repeating the same process for every $k=1,\dots,s$. This gives us a number of equations equal to
\[
\p(s)=\sum_{k=1}^s\p_k(s).
\]
In other words, we have to guarantee the existence, for every $k=1,\dots,s$, of a set of points 
\[
\calA_{k}=\{\bfa_{k,1},\dots,\bfa_{k,\p_k(s)}\},
\]
such that
\begin{equation}
\label{rel_q_n^s_linear combination_of_p_2m}
q_n^s=\sum_{k=1}^s\sum_{j=1}^{\p_k(s)}c_{k,j}f_{k,\bfa_{k,j}},
\end{equation}
for some coefficients $c_{k,j}\in\bbC$.
To find these points, we denote each coordinate of the points by considering $\bfa_{k,j}=(a_{k,j,1},\dots,a_{k,j,k})$
for every $j=1,\dots,\p_k(s)$.
Formula \eqref{rel_formula_q_n^s_linear_comb_p} tells us that
\[
q_n^s=\sum_{k=1}^s\sum_{\bfm\in\calP_k(s)}\binom{s}{m_1,\dots,m_k}M_{2\bfm}.
\]
Hence, for every $k=1,\dots,s$, we combine formulas \eqref{rel_formula_q_n^s_linear_comb_p}, \eqref{rel_final_formula_f_k_a} and \eqref{rel_q_n^s_linear combination_of_p_2m}, to obtain a linear equation for each partition $\bfm\in\calP_k(s)$. This equation is obtained by equalizing the coefficients of the polynomial $M_{2\bfm}$, leading to
\begin{equation}
\label{rel_equation_conditions}
\sum_{\lambda=k}^s\sum_{j_{\lambda}=1}^{\p_{\lambda}(s)}c_{\lambda,j_{\lambda}}\biggl(\frac{1}{\abs{(\mfS_k)_{\bfa_{\lambda,j_{\lambda}}}}}\sum_{\sigma\in\mfS_{\lambda}}\prod_{i=1}^\lambda a_{\lambda,j_{\lambda},\sigma(i)}^{2m_i}\biggr)=2^{1-k}\binom{n-\lambda}{k-\lambda}^{-1}\binom{2s}{2m_1,\dots,2m_{\lambda}}^{-1}\binom{s}{m_1,\dots,m_{\lambda}}.
\end{equation}
Formula \eqref{rel_q_n^s_linear combination_of_p_2m} provides a linear system of exactly $\p(s)$ equations of the type of formula \eqref{rel_equation_conditions}. The system has $\p(s)$ unknowns,  corresponding to the coefficients $c_{k,j_k}$ for $k=1,\dots,s$ and $j_k=1,\dots,\p_k(s)$. Denote each of the values which multiply coefficients $c_{k,j_k}$ as
\begin{equation}
\label{def_polynomials_h_i}
h_{k,j_k,\lambda}=h_{k,j_k}(\bfm_{\lambda})=\frac{1}{\abs{(\mfS_k)_{\bfa_{k,j}}}}\sum_{\sigma\in\mfS_{k}}\prod_{i=1}^k a_{k,j,\sigma(i)}^{2m_{\lambda,i}},
\end{equation}
where $\bfm_1,\dots\bfm_{\p_k(s)}\in\bbN_0^k$ are the $k$-partitions in $\calP_k(s)$. Now, the aim is to establish in which cases the linear system admits a solution. Thus we want to determine whenever the associated matrix is invertible.
This is given by
\begin{equation*}
\begin{pNiceMatrix}[margin,parallelize-diags=false]
  \Block[fill=gray!10,borders={top,left,right,bottom}]{1-1}{} h_{s,1,1}&0  &  \Cdots & & & & &  0\\
 *& \Block[fill=gray!10,borders={top,left,right,bottom}]{1-1}{} h_{s-1,1,1} &0  &\Cdots  & & & &0 \\
 & * & \ddots & & \\ 
  \Vdots & \Vdots & & \Block[fill=gray!10,borders={top,left,right,bottom}]{3-3}{} h_{k,1,1} &\Cdots & h_{k,\p_k(s),1} & &\Vdots\\
   &  &   &\Vdots  & \Ddots & \Vdots & &\\
   &  &  & h_{k,1,\p_k(s)}&\Cdots& h_{k,\p_k(s),\p_k(s)}\\
  & & & & & &\ddots &0\\
 * & * &\Cdots & & & & * &\Block[fill=gray!10,borders={top,left,right,bottom}]{1-1}{} h_{1,1,1}
\end{pNiceMatrix}
\end{equation*}
with several blocks on the diagonal of order $\p_k(s)$ for each $k=1,\dots,s$. In particular, the matrix is invertible if and only if each block has a non-zero determinant. For simplicity, we can assume that the coordinates of the points involved are pairwise distinct. This is possible because the set of points that do not annihilate each determinant is open.
Nevertheless, the number of points used to obtain such a decomposition can be much higher than the minimal ones. 
For each $k=1,\dots,s$, the elements $h_{k,j_k,\lambda}$ can be treated as polynomials with variables $a_{k,j_k,1},\dots,a_{k,j_k,k}$. Let us now look at the polynomials
\[
H_{k,\lambda}(x_1,\dots,x_k)=\frac{1}{\abs{(\mfS_k)_{\bfa_{k,j}}}}\sum_{\sigma\in\mfS_{k}}\prod_{i=1}^k x_{\sigma(i)}^{2m_{\lambda,i}},
\] 
with $\lambda=1,\dots,\rmp_k(s)$, representing the $k$-partitions in $\calP_k(s)$. These polynomials are obtained by replacing the variables $x_1,\dots,x_k$ by $a_{k,j_k,1},\dots,a_{k,j_k,k}$, that is, 
\[
h_{k,j_k,\lambda}=H_{k,\lambda}(a_{k,j_k,1},\dots,a_{k,j_k,k}),
\]
for every point $\bfa_{k,j_k}\in\bbC^k$, with $j_k=1,\dots,k$.
We want to prove that the polynomials $H_{k,\lambda}$ are linearly independent. So, let $c_1,\dots,c_{\rmp_k(s)}\in\bbC$ be coefficients such that
\[
c_1H_{k,1}+\cdots+c_{\rmp_k(s)}H_{k,\rmp_k(s)}=0.
\]
Every monomial appearing in this linear combination involves exactly $k$ variables. Furthermore, by the definition of $h_{k,j_k,\lambda}$, seen in formula \eqref{def_polynomials_h_i}, each of these monomials appears in only one of the polynomials $H_{k,\lambda}$. Indeed, every $H_{k,\lambda}$ involves only monomials of the same permutation class induced by the $k$-partition $\bfm_{\lambda}\in\calP_k(s)$. Therefore, since the permutation classes of monomials form a partition of the set of monomials with $k$ variables, the zero polynomial is obtained only if
\[
c_1=\cdots=c_{\rmp_k(s)}=0,
\]
which means that the polynomials $H_{k,\lambda}$ are linearly independent. 
Hence, we obtain the statement simply by applying \autoref{lem_det_nonzero}.
\end{proof}
The upper bound given by \autoref{teo_upper_bound_factorial} can be improved by considering only one point for the partitions of order $s$ and $s-1$, as these are unique and hence have only one condition to satisfy. 
\begin{proof}[Proof of \autoref{cor_upper_bound_quadrics}]
By choosing the points
\[
\bfa_{s,1}=\bfa_{s-1,1}=(1,\dots,1),
\]
in the proof of \autoref{teo_upper_bound_factorial}, we get the statement.
\end{proof}
By combining \autoref{prop_lower_bound_catalecticant_powers_quadrics} and \autoref{cor_upper_bound_quadrics}, we are able to prove obtain \autoref{cor_log_limit}.
\begin{proof}[Proof of \autoref{cor_log_limit}]
We have
\[
\binom{s+n-1}{s}=\frac{1}{s!}\prod_{j=1}^s(n-1+j)=\frac{1}{s!}\bigl(n^s+f(n)\bigr),
\]
where $f\in\bbC[n]$ is a polynomial of degree $s-1$. Thus, since
\[
\lim_{n\to +\infty}\frac{n^s}{n^s+f(n)}=1,
\]
we get
\[
\lim_{n\to +\infty}\log_n\Biggl(\binom{s+n-1}{s}\Biggr)=\lim_{n\to +\infty}\log_n\biggl(\frac{n^s}{s!}\biggr)=s.
\]
If we now look at the second member of inequality \eqref{rel_formula_upper_bound_corollary}, we get
\[
2^{s-1}\binom{n}{s}+2^{s-2}\binom{n}{s-1}+\sum_{k=1}^{s-2}2^{k-1}k!\p_k(s)\binom{n}{k}\leq 2^{s-1}\binom{n}{s}+2^{s-2}s!\binom{n}{s-1}\leq \frac{2^{s-1}}{s!}n^s+2^{s-2}sn^{s-1}.
\]
As above, we have
\[
\lim_{n\to +\infty}\frac{n^s}{n^s+\dfrac{s}{2}s!n^{s-1}}=1
\]
and hence
\[
\lim_{n\to +\infty}\log_n\Biggl(2^{s-1}\binom{n}{s}+2^{s-2}\binom{n}{s-1}+\sum_{k=1}^{s-2}2^{k-1}k!\p_k(s)\binom{n}{k}\Biggr)\leq\lim_{n\to +\infty}\log_n\biggl(\frac{2^{s-1}}{s!}n^s\biggr)=s.
\]
Therefore, by \autoref{prop_lower_bound_catalecticant_powers_quadrics} and \autoref{cor_upper_bound_quadrics}, we get the statement.
\end{proof}

\section{Subgeneric rank}
\label{sec:subgeneric_rank}
We have observed from \autoref{prop_subgeneric_rank_q_n^5} that the size of decomposition \eqref{rel_decompos_q_n^5} is subgeneric when $n>8$. Although it is not easy to determine the exact minimum value for which the higher numbers only give subgeneric rank, we can estimate that this value is quite low. Indeed, according to \autoref{cor_upper_bound_quadrics} and formula \eqref{rel_upper_bound_p_k(s)}, we obtain the inequality
\begin{align}
\label{rel_upper_bound_powers_quadrics}
\nonumber\rk(q_n^s)&\leq 2^{s-1}\binom{n}{s}+2^{s-2}\binom{n}{s-1}+\sum_{k=1}^{s-2}2^{k-1}k!\p_k(s)\binom{n}{k}\\[1ex]
&\leq 2^{s-1}\binom{n}{s}+2^{s-2}\binom{n}{s-1}+\sum_{k=1}^{s-2}\frac{2^{k-1}}{(k-1)!}\frac{(s-1)!n!}{(s-k)!(n-k)!}.
\end{align}
Meanwhile, the generic rank is equal to
\[
\frac{1}{n}\binom{2s+n-1}{2s}=\frac{1}{n}\biggl\lceil\frac{(2s+n-1)!}{(2s)!(n-1)!}\biggr\rceil.
\]
Hence, in the case of $n\to+\infty$, the value of the generic rank grows as $n^{2s-1}$. However, the upper bound in formula \eqref{rel_upper_bound_powers_quadrics} tells us that the rank of $q_n^s$ grows more slowly than $n^{s}$ as $n\to +\infty$.

Following the proof of \autoref{teo_upper_bound_factorial}, the goal is to determine some points $\bfa_{k,1},\dots,\bfa_{k,\p_k(s)}$ for every block matrix
\[
\begin{pNiceMatrix}
g_{k,1}(\bfa_{k,1})&\Cdots& g_{k,1}\bigl(\bfa_{k,\p_k(s)}\bigr)\\
 \Vdots &\Ddots&\Vdots\\
g_{k,\p_k(s)}(\bfa_{k,1})&\Cdots& g_{k,\p_k(s)}\bigl(\bfa_{k,\p_k(s)}\bigr)
\end{pNiceMatrix}
\]
such that its determinant is non-zero. Obviously, it would be convenient to choose as few distinct points as possible to get the smallest possible size of such decompositions.
Although this may be easy for small values of $k$, it becomes more difficult for larger sizes. 

Let us consider the upper bound of \autoref{teo_upper_bound_factorial}, given by
\begin{equation}
\label{rel_estimation_upper_bound}
\sum_{k=1}^s2^kk!\p_k(s)\binom{n}{k}.
\end{equation}
All the elements involved in the summation are quantities that have been extensively studied in the literature and for which many estimates and approximations have been provided.
A more precise estimate of the values involved in formula \eqref{rel_estimation_upper_bound} can be obtained using the classical and well-known Stirling's approximation to the factorial of an arbitrary natural number $n$ (see \cite{Sti30}, or \cite{Twe03} for a more recent version of the proof). This approximation shows that
\begin{equation}
\label{rel_Stirling_formula}
\sqrt{2\uppi}n^{n+\frac{1}{2}}\rme^{-n+\frac{1}{12n+1}}<n!<\sqrt{2\uppi}n^{n+\frac{1}{2}}\rme^{-n+\frac{1}{12n}}
\end{equation}
and hence, considering just the inverse of $n!$, we have
\begin{equation}
\label{rel_Stirling_formula_mutual}
\frac{1}{\sqrt{2\uppi}}n^{-n-\frac{1}{2}}\rme^{n-\frac{1}{12n}}<\frac{1}{n!}<\frac{1}{\sqrt{2\uppi}}n^{-n-\frac{1}{2}}\rme^{n-\frac{1}{12n+1}}.
\end{equation}
Now, considering the binomial coefficient
\[
\binom{n}{k}=\frac{n!}{k!(n-k)!},
\]
we can use relations \eqref{rel_Stirling_formula} and \eqref{rel_Stirling_formula_mutual} to provide the bounds
\begin{equation}
\label{rel_binomial_approximation_Stirling}
\frac{1}{\sqrt{2\uppi k}}\biggl(\frac{n}{n-k}\biggr)^{n+\frac{1}{2}}\biggl(\frac{n-k}{k}\biggr)^k\rme^{\frac{1}{12n+1}-\frac{n}{12k(n-k)}}<\binomial{n}{k}<\frac{1}{\sqrt{2\uppi k}}\biggl(\frac{n}{n-k}\biggr)^{n+\frac{1}{2}}\biggl(\frac{n-k}{k}\biggr)^k\rme^{\frac{1}{12n}-\frac{12n+1}{(12k+1)(12(n-k)+1)}}.
\end{equation}
Finally, we also provide an upper bound for the partition function $\p(s)$. D.~M.~Kane shows in \cite{Kan06}*{Remark 1} that for every $n\in\bbN$, we get 
\[
\frac{C^{-}_{1,1}}{n}\rme^{\uppi\sqrt{\frac{2n}{3}}}\leq\p(n)\leq\frac{C^{+}_{1,1}}{n}\rme^{\uppi\sqrt{\frac{2}{3}}},
\]
where $C_{1,1}^{-}$ and $C_{1,1}^{+}$ are some specified constant values. These inequalities are also used by A.~Y.~Oru\c{c} in \cite{Oru16}, where the author provides some upper bounds for the function $\p_k(n)$, to which we refer for further details about this. In particular, as explained for \cite{Oru16}*{formula (4)}, we can suppose $C_{1,1}^{+}=6$, getting
\begin{equation}
\label{rel_upper_bound_p(n)}
\p(n)\leq \frac{6}{n}\rme^{\uppi\sqrt{\frac{2n}{3}}}
\end{equation}
for every $n\in\bbN$.
\begin{proof}[Proof of \autoref{teo_subgeneric_rank}]
According to \autoref{teo_upper_bound_factorial}, we have the inequality
\begin{equation}
\rk(q_n^s)\leq\sum_{k=1}^s2^kk!\p_{k}(s)\binom{n}{k}\leq 2^ss!\binom{n}{s}\sum_{k=1}^s\p_k(s)=\frac{2^sn!}{(n-s)!}\p(s).
\end{equation}
By employing relations \eqref{rel_Stirling_formula}, \eqref{rel_Stirling_formula_mutual}, \eqref{rel_binomial_approximation_Stirling} and \eqref{rel_upper_bound_p(n)}, we can write
\begin{equation}
\label{rel_first_side_inequality_rank}
\frac{2^sn!}{(n-s)!}\p(s)\leq \frac{6\cdot 2^s}{s}\biggl(\frac{n}{n-s}\biggr)^{n+\frac{1}{2}}(n-s)^{s}\rme^{\uppi\sqrt{\frac{2s}{3}}+\frac{1}{12n}-\frac{1}{12(n-s)+1}-s}.
\end{equation}
Furthermore, by relation \eqref{rel_binomial_approximation_Stirling}, we have inequality
\begin{equation}
\label{rel_second_side_inequality_rank}
\frac{1}{n}\binom{2s+n-1}{2s}>\frac{1}{\sqrt{4\uppi s}}\biggl(\frac{2s+n-1}{n-1}\biggr)^{2s+n-\frac{1}{2}}\biggl(\frac{n-1}{2s}\biggr)^{2s}\rme^{-\frac{12n^2+72ns-23n+48s^2-70s+11}{24s(n-1)(12n+24s-11)}}
\end{equation}
Now, we want to see for which cases the second term of inequality \eqref{rel_first_side_inequality_rank} is lower than the second term of inequality \eqref{rel_second_side_inequality_rank}. So, we have
\[
\frac{6\cdot 2^s}{s}\biggl(\frac{n}{n-s}\biggr)^{n+\frac{1}{2}}(n-s)^{s}\rme^{\uppi\sqrt{\frac{2s}{3}}+\frac{1}{12n}-\frac{1}{12(n-s)+1}-s}<\frac{1}{\sqrt{4\uppi s}}\biggl(\frac{2s+n-1}{n-1}\biggr)^{2s+n-\frac{1}{2}}\biggl(\frac{n-1}{2s}\biggr)^{2s}\rme^{\frac{1}{12(2s+n-1)+1}-\frac{2s+n-1}{24s(n-1)}}
\] 
if and only if
\[
\frac{12\cdot 2^{s}\sqrt{\uppi}(n-1)}{\sqrt{s}(2s+n-1)}\biggl(\frac{n(n-1)}{(n-s)(2s+n-1)}\biggr)^{n+\frac{1}{2}}\biggl(\frac{2s\sqrt{n-s}}{2s+n-1}\biggr)^{2s}\rme^{\uppi\sqrt{\frac{2s}{3}}+\frac{1}{12n}+\frac{2s+n-1}{24s(n-2s)}-\frac{1}{12(2s+n-1)+1}-\frac{1}{12(n-s)+1}-s}<1
\]
and, making some calculations, we can see that 
\[
\frac{1}{2n}+\frac{2s+n-1}{24s(n-1)}-\frac{1}{12(2s+n-1)+1}<1
\]
for every $s>1$ and $n>(2s-1)^2$. Thus, we can also write
\[
\frac{12\cdot 2^{s}\sqrt{\uppi}(n-1)}{\sqrt{s}(2s+n-1)}\biggl(\frac{n(n-1)}{(n-s)(2s+n-1)}\biggr)^{n+\frac{1}{2}}\biggl(\frac{2s\sqrt{n-s}}{2s+n-1}\biggr)^{2s}\rme^{\uppi\sqrt{\frac{2s}{3}}-s+1}<1,
\]
which leads to
\[
\frac{(n-1)}{\sqrt{s}(2s+n-1)}2^{s}12\sqrt{\uppi}\rme^{\uppi\sqrt{\frac{2s}{3}}-s+1}\biggl(\frac{n(n-1)}{(n-s)(2s+n-1)}\biggr)^{n+\frac{1}{2}}\biggl(\frac{2s\sqrt{n-s}}{2s+n-1}\biggr)^{2s}<1.
\]
Now we need to analyze each factor and see when these are lower than $1$.  
First we observe that for $n>(2s-1)^2$ we have
\[
\frac{(n-1)}{\sqrt{s}(2s+n-1)}<1,\quad \frac{n(n-1)}{(n-s)(2s+n-1)}=\frac{n(n-1)}{n(n-1)+s(n-2s+1)}<1.
\]
Regarding the element,
\[
\frac{2s\sqrt{n-s}}{2s+n-1},
\]
we can compute that this is lower than $1$ if and only if
\[
4s^2(n-s)<4s^2+n^2+1+4sn-4s-2n,
\]
which simplifies to
\[
n^2-2(2s^2-2s+1)n+4s^3+4s^2-4s+1>0.
\]
This inequality holds, in particular, if
\[
n>(2s^2-2s+1)+2s\sqrt{s^2-3s+1},
\]
and hence also if
\[
n\geq(2s^2-2s+1)+2s\sqrt{s^2-2s+1}=(2s^2-2s+1)+2s(s-1)=(2s-1)^2.
\]
It remains to analyze the last factor $2^{s}12\sqrt{\uppi}\rme^{\uppi\sqrt{\frac{2s}{3}}-s+1}$, but it is not difficult to see that
\[
2^{s}12\sqrt{\uppi}\rme^{\uppi\sqrt{\frac{2s}{3}}-s+1}<1
\]
whenever $s\geq 95$. Although this estimate proves the statement for $s\geq 95$, it is possible to compute that, for $6\leq s\leq 94$, the upper bound provided by \autoref{teo_upper_bound_factorial} is lower than generic rank, i.e.,
\begin{equation}
\label{rel_inequality_generic_rank}
\sum_{k=1}^s2^kk!\p_k(s)\binom{n}{k}<\frac{1}{n}\binom{2s+n-1}{2s}
\end{equation}
 whenever $n> (2s-1)^2$.
First we observe that, if formula \eqref{rel_inequality_generic_rank} holds for some $s,n\in\bbN$, then it also holds for the same $s$ and $n+1$. Indeed, we have
\[
\sum_{k=1}^s2^kk!\p_k(s)\binom{n}{k}<\frac{1}{n}\binom{2s+n}{2s}=\frac{n+1}{2s+n}\Biggl(\frac{1}{n+1}\binom{2s+n-1}{2s}\Biggr),
\]
that is
\[
\frac{2s+n}{n+1}\sum_{k=1}^s2^kk!\p_k(s)\frac{n-k+1}{n+1}\binom{n+1}{k}<\frac{1}{n+1}\binom{2s+n-1}{2s}.
\]
In particular, for any $1\leq k\leq s$, we want to prove that
\[
\frac{(2s+n)(n-k+1)}{(n+1)^2}>\frac{(2s+n)(n-s+1)}{(n+1)^2}>1,
\]
where the last inequality is equivalent to
\[
n(s-1)>2s^2-2s+1.
\]
In particular, since $n>(2s-1)^2$, this is equivalent to
\[
n(s-1)>(4s^2-4s+1)(s-1)=4s^3-8s^2+5s-1,
\]
and we have
\[
4s^3-8s^2+5s-1>2s^2-2s+1
\]
if and only if
\[
4s^3-10s^2+7s-2>0.
\]
Since this last inequality holds for every $s\geq 2$, we simply have to prove that  formula \eqref{rel_inequality_generic_rank} holds for $6\leq s\leq 94$ and $n=(2s-1)^2+1$. Using the upper bound of formula \eqref{rel_upper_bound_p_k(s)}, it is easy to verify, by some computations, that formula \eqref{rel_inequality_generic_rank} holds for these values.
\end{proof}

\section*{Acknowledgements}
This paper is the natural continuation of the author's Ph.D.~thesis, which was pursued at Alma Mater Studiorum -- Università di Bologna, under the patient co-supervision of Alessandro Gimigliano and Giorgio Ottaviani, to whom sincere thanks are due. The author also extends special gratitude to Enrique Arrondo and Jaros\l{aw} Buczyński for their invaluable assistance and insightful suggestions during the author's Ph.D.~program, and to Vincenzo Galgano, Fulvio Gesmundo, Pierpaola Santarsiero, and Emanuele Ventura for their comments. The author is a member of the research group \textit{Gruppo Nazionale per le Strutture Algebriche Geometriche e Affini} (GNSAGA) of \textit{Istituto Nazionale di Alta Matematica} (INdAM) and is supported by the scientific project \textit{Multilinear Algebraic Geometry} of the program \textit{Progetti di ricerca di Rilevante Interesse Nazionale} (PRIN), Grant Assignment Decree No.~973 adopted on 06/30/2023 by the Italian Ministry of University and Research (MUR).

\bibliographystyle{amsalpha}
\bibliography{UpperBoundsRankPowersQuadraticFormsVersion2.bib}

\end{document}